\documentclass[12pt]{article}
\usepackage{a4wide,amsmath,amssymb,hyperref}
\usepackage{graphicx}
\usepackage{hyperref}
\usepackage{mathrsfs}
\usepackage{amsfonts,amsmath,amsthm, amssymb}
\usepackage{latexsym, euscript, epic, eepic, color}
\usepackage{indentfirst}
\usepackage{color}
\usepackage{enumerate}
\usepackage{cite}
\usepackage{footmisc}
\usepackage{verbatim}
 \sloppy
\parskip=0.5ex

\setlength{\parindent}{2em}
\pagestyle{plain} \headsep=5mm \headheight=5mm \textwidth=160mm
\textheight=23cm \oddsidemargin=5mm \evensidemargin=5mm
\topmargin=0in
\makeatletter
\@addtoreset{equation}{section}
\makeatother
\newtheorem{theorem}{Theorem}[section]
\newtheorem{lemma}{Lemma}[section]
\newtheorem{proposition}{Proposition}[section]

\newtheorem*{lthm}{Theorem LSV}

\newtheorem*{llw1thm}{Theorem LLW1}
\newtheorem*{llwconj}{Conjecture LLW}
\newtheorem*{llw2thm}{Theorem LLW2}
\newtheorem{corollary}{Corollary}[section]

\theoremstyle{definition}

\newtheorem{example}{Example}[section]

\makeatletter
\newcommand{\rmnum}[1]{\romannumeral #1}
\newcommand{\Rmnum}[1]{\expandafter\@slowromancap\romannumeral #1@}
\makeatother

\theoremstyle{remark}
\newtheorem{remark}{{Remark}}[section]


\setlength{\parskip}{0.5em}
\pagestyle{plain}

\renewcommand{\emph}[1]{{\it#1}}

\newcommand{\red}[1]{\textcolor[rgb]{1.00,0.00,0.00}{#1}}
\newcommand{\sv}[1]{{\color{blue}   {#1}}}

\DeclareMathOperator{\diam}{diam}


\begin{document}
\title{Intersecting well approximable and missing digit sets}

\author{Bing Li\textsuperscript{a}, Sanju Velani\textsuperscript{b} and Bo Wang\textsuperscript{c,d} \\
\small \it \textsuperscript{\rm a }School of Mathematics, South China University of Technology,\\[-1ex]
\small \it Guangzhou 510641, China\\
\small \it \textsuperscript{\rm b }Department of Mathematics, University of York,\\[-1ex]
\small \it Heslington, York, YO10 5DD, England\\
\small \it \textsuperscript{\rm c }School of Mathematics, Jiaying University,
\small \it Meizhou 514015, China\\
\small \it \textsuperscript{\rm d }School of Mathematics, Sun Yat-sen University,
\small \it Guangzhou 510275, China
}
\date{}
\maketitle
\begin{center}
\begin{minipage}{135mm}
{\footnotesize {\bf Abstract.}  Let $b\geq3$ be an integer and $C(b,D)$ be the set of real numbers in $[0,1]$ whose $b$-ary expansion consists of digits restricted  to a given set $D\subseteq\{0,\ldots,b-1\}$. Given an integer $t\geq2$ and a real, positive function $\psi$, let $W_{t}(\psi)$ denote the set of  $x$ in $[0,1]$ for which
 $|x-  p/t^{n}|<\psi(n)$ for infinitely many $(p,n)\in\mathbb{Z}\times\mathbb{N}$. We prove a general Hausdorff dimension result concerning  the intersection of $W_{t}(\psi)$ with an arbitrary self similar set which  implies that  $\dim_{\rm H}(W_{t}(\psi)\cap C(b,D))   \le    \dim_{\rm H}W_{t}(\psi)   \times \dim_{\rm H}C(b,D)$.   When $b$ and $t$ have the same prime divisors,  under certain restrictions on the digit set $D$,  we give a sufficient condition for the Hausdorff measure of $W_{t}(\psi)\cap C(b,D)$ to be zero.  This closes  a gap in a result of  Li, Li and Wu \cite{LLW2025} and shows that the dimension of the intersection can be strictly less than    the product of the dimensions.  The latter disproves the product conjecture of  Li, Li and Wu. 
}
\end{minipage}
\end{center}

\vskip0.5cm \begin{center} {\footnotesize {\bf Key words:} Diophantine approximation, missing digit sets, Hausdorff measure and dimension}
            \end{center}


\footnotetext{\small \it E-mails addresses: \rm scbingli@scut.edu.cn (B. Li), sanju.velani@york.ac.uk (S. Velani),
math\_bocomeon@163.com (B. Wang).}

\section{Introduction}
\subsection{Background and motivation}
In 1984, Mahler published an influential paper \cite{M1984} entitled `Some suggestions for further research', in which he raised the following problem:
\begin{itemize}
\item[``\!\!]How close can irrational elements of Cantor's set be approximated by rational numbers (i) in Cantor's set, and (ii)  by rational numbers not in Cantor's set?''
\end{itemize}
Mahler's problem has inspired a body of fundamental research and we refer the reader to \cite{ABCY2023,ACY2023,B2024, B2025,BHZ2024,BHZ2025,B2008,BD2016,FD2014,FD2015,
KL2023,LSV2007,LLW2025,S2021,TWW2024,WWX2017,W2001,Y2021} and references therein. In this paper, we do not attempt to give an exhaustive overview but instead concentrate on the key results that are relevant to the present work.  In short, the solution to  (i) in its simplest form  states:  as close as you like!  Indeed, this is a straightforward consequence of a natural Hausdorff measure criterion for well approximable sets  in which the denominators of the rational approximates are restricted to powers of three.    This will be our starting point.

Throughout,  let $C$   denote the standard  middle-third Cantor set; that is the set of real numbers in $[0,1]$ whose ternary expansion contains only the digits $0$ and $2$.  Also, for an integer $t \ge 2$ and
 a real, positive function $\psi:\mathbb{N}\to(0,\infty)$, we define
\begin{equation} \label{dioset}
W_{t}(\psi) :=\Big\{x\in[0,1]:\left|x-
p/t^n
\right|<\psi(n)\ {\rm for\ i.m.}\ (p,n)\in\mathbb{Z}\times\mathbb{N}\Big\}.
\end{equation}
where ``i.m.'' stands for ``infinitely many''. Thus,  $W_{t}(\psi)$  is the standard set of $\psi$-well approximable numbers in which the  denominators of the rational approximates $p/q$ are restricted to powers of $t$.  With $t=3$,  Levesley et al \cite[Theorem 1]{LSV2007} proved  the  following Hausdorff measure criterion for the size of the set $W_{3}(\psi)\cap C$.

\begin{lthm}
For any real number $s> 0 $,
\begin{equation}\label{19}
\begin{aligned}
\mathcal{H}^{s}(W_{3}(\psi)\cap C)=\begin{cases} 0  \ &{\rm if}\quad \sum_{n=1}^{\infty}\psi(n)^{s} \, 3^{\frac{\log 2}{\log 3}n}<\infty, \\[2ex]
\mathcal{H}^{s}(C) \ &{\rm if}\quad \sum_{n=1}^{\infty}\psi(n)^{s}  \, 3^{\frac{\log 2}{\log 3}n}=\infty. \end{cases}
\end{aligned}
\end{equation}
\end{lthm}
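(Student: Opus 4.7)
My plan splits along the usual dichotomy. First, for $s > d := \log 2/\log 3$ the conclusion is immediate since $\mathcal{H}^s(C) = 0$; henceforth assume $s \le d$.

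\textbf{Convergence.} If $\sum 2^n \psi(n)^s < \infty$, then $\psi(n) \le 3^{-n}$ for all large $n$. If $x \in C$ and $|x - p/3^n| < \psi(n) \le 3^{-n}$, then $p/3^n$ lies within $3^{-n}$ of $C$; a quick grid-spacing argument shows that any $p/3^n$ strictly inside a gap of the Cantor construction is at distance $\ge 3^{-n}$ from $C$, so the only admissible rationals at level $n$ are the $2^{n+1}$ endpoints of the level-$n$ Cantor intervals. Covering $W_3(\psi)\cap C$ by balls of radius $\psi(n)$ around these endpoints yields
\[
\mathcal{H}^s_\delta(W_3(\psi)\cap C) \;\ll\; \sum_{n \ge N} 2^n \psi(n)^s \;\to\; 0 \quad \text{as } N\to\infty.
\]

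\textbf{Divergence.} Let $\mu$ denote the natural self-similar probability measure on $C$; it is $d$-Ahlfors regular and proportional to $\mathcal{H}^d|_C$. The key auxiliary step is a $\mu$-Khintchine lemma: for any sequence $(\rho_n)$ with $\sum 2^n \rho_n^d = \infty$, $\mu$-almost every $x \in C$ satisfies $|x - \alpha| < \rho_n$ for infinitely many pairs $(n,\alpha)$ with $\alpha \in R_n$, where $R_n$ is the set of level-$n$ endpoints. This reduces to a divergence Borel--Cantelli estimate: putting $E_n := \bigcup_{\alpha \in R_n} B(\alpha, \rho_n)$, Ahlfors regularity gives $\mu(E_n) \asymp 2^n \rho_n^d$, and exact self-similarity of $\mu$ yields the quasi-independence bound $\mu(E_m \cap E_n) \ll \mu(E_m)\mu(E_n)$ for $m \ne n$. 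Applied with $\rho_n = \psi(n)$ this immediately settles the divergence case at the critical exponent $s = d$.

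For $s < d$ I would then invoke the Mass Transference Principle of Beresnevich--Velani, in its form for Ahlfors regular self-similar reference measures: the $\mu$-full $\limsup$ statement at radii $\psi(n)^{s/d}$ (which holds since $\sum 2^n (\psi(n)^{s/d})^d = \sum 2^n \psi(n)^s = \infty$) upgrades to $\mathcal{H}^s$-full for balls at the original radii $\psi(n)$, giving exactly $\mathcal{H}^s(W_3(\psi)\cap C) = \mathcal{H}^s(C)$. \textbf{The main obstacle} is the quasi-independence/ubiquity estimate underpinning the $\mu$-Khintchine lemma: one must control how balls of radius $\rho_m$ around level-$m$ endpoints interact with the level-$n$ cylinder structure of $C$ for disparate $m<n$. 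Fortunately every level-$n$ cylinder already contains two endpoints from $R_n$, so the resonant system is optimally distributed with respect to $\mu$; this keeps the independence estimate tractable, and one could alternatively bypass the Mass Transference Principle by constructing a suitable Cantor-like subset of $W_3(\psi)\cap C$ of the correct dimension, in the spirit of the original Levesley--Salp--Velani argument.
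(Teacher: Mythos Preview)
The paper does not prove Theorem~LSV; it is quoted from Levesley--Salp--Velani \cite{LSV2007} as background, and the paper's own contributions are Theorems~\ref{12sv}, \ref{12} and \ref{11} together with their corollaries. So there is no ``paper's proof'' to compare your attempt against.

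That said, your sketch is essentially the strategy of the original \cite{LSV2007}. The convergence half is precisely their covering argument: once $\psi(n)\le 3^{-n}$, any triadic rational $p/3^{n}$ within $\psi(n)$ of $C$ must be an endpoint of a level-$n$ basic interval (any $p/3^{n}$ lying strictly inside a gap is at distance at least $3^{-n}$ from $C$), and there are $\asymp 2^{n}$ such endpoints. This is exactly the $b=t=3$, $D=\{0,2\}$ instance of Lemma~\ref{30}(ii) in the present paper. For the divergence half, \cite{LSV2007} likewise first establish a full-$\mu$-measure statement for the endpoint system via a quasi-independence/ubiquity estimate, and then upgrade to general $s<\gamma$ by the Mass Transference Principle; your outline matches this route. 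The quasi-independence bound $\mu(E_{m}\cap E_{n})\ll\mu(E_{m})\mu(E_{n})$ is where the real work lies, but, as you observe, every level-$n$ cylinder already contains endpoints from $R_{n}$, which is exactly the local ubiquity input needed, and the estimate goes through.
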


\noindent Throughout, for  a subset $X$ of $\mathbb{R}$,  we denote by  $\mathcal{H}^{s}(X)$ the $s$-dimensional Hausdorff measure and by  $ \dim_{\rm H} X$  the Hausdorff dimension of  $X$  -- see \S \ref{22} for the definitions.   Several comments are in order.
\begin{itemize}
  \item[$\bullet$]    The  theorem implies the following dimension statement:
\begin{equation}  \label{Cdimstate}
\dim_{\rm H}(W_{3}(\psi)\cap C)=  \frac{1}{\lambda_{\psi}} \times \frac{\log 2}{\log 3} =  \dim_{\rm H}W_{3}(\psi)   \times \dim_{\rm H}C ,
\end{equation}
where   $\lambda_\psi \ge 1 $ is given by  \eqref{dim-order} below with $t=3$.
  \item[$\bullet$]  If $|x-
p/3^n  |<\psi(n) $ for some $ x \in C $ and $\psi(n)\leq 3^{-n}  $, then it is easily verified that  the rational  $p/3^n$ must lie in $C$.  Thus, points in $W_{3}(\psi)\cap C$ are $\psi$-well approximable  with respect to ternary rational numbers in $C$.
   \item[$\bullet$] When $s = \log2/\log 3 = \dim_{\rm H}C$, the measure $\mathcal{H}^{s}$ is simply the standard Cantor measure $\mu$ supported on $C$.
\end{itemize}
%

\noindent Theorem~LSV can be viewed as the ``restricted'' analogue of the classical Jarn{\'\i}k-Besicovitch theorem - for this and a basic overview of  the classical theory of metric Diophantine approximation, see   \cite{MR3618787} and references therein.

The above theorem, naturally leads to the challenge of establishing an analogue for other denominators and missing digit sets. More specifically,  given (i) an integer $t\geq2$ and a function  $\psi:\mathbb{N}\to(0,\infty)$ we consider the general set given above by  \eqref{dioset} and (ii) given  an integer $b\geq3$ and a set $D \subseteq \{0,\ldots,b-1\}$ with cardinality   $\# D \ge 2$, we consider the missing digit set $C(b,D)$; that is,  the set of real numbers in $[0,1]$ whose $b$-ary expansion contains only the digits in $D$.  Before discussing the analogues of Theorem~LSV for $W_{t}(\psi)\cap C(b,D)$, we recall two well known facts:
\begin{equation}  \label{dim-cantorSV}
\dim_{\rm H} C(b,D) =  \gamma:=\frac{\log \#D}{\log b}
\end{equation}
and
\begin{equation}  \label{dim-order}
\dim_{\rm H}W_{t}(\psi) =   \min \Big\{  1, \frac{1}{\lambda_\psi}  \, \Big\},  \ \ \ \  {\rm where }  \ \   \lambda_\psi:= \liminf_{n \to \infty}  \frac{- \log \psi(n)}{n \log t }  \, .
\end{equation}
\noindent When investigating the intersection of $W_{t}(\psi)$ and $ C(b,D)$,  it is natural to consider three cases corresponding to whether or not $b$ and $t$ are multiplicatively independent (i.e., $\frac{\log b}{\log t}\notin\mathbb{Q}$) and if they are,  whether or not $b$ and $t$  have the same prime divisors.  Each case influences how the sets  $W_{t}(\psi)$ and $ C(b,D)$  behave and interact, based on the multiplicative properties of $b$ and $t$. This will become clear as the discussion unfolds.

Let us begin with the case that $b$ and $t$ are multiplicatively dependent.  Thus,  by definition $\frac{\log b}{\log t}\in\mathbb{Q}$ and for the moment let us further assume that $b=t$. In  \cite[\S7]{LSV2007}, the authors state  that the arguments used in the proof of their  Theorem~LSV ``can be modified in the obvious manner'' to yield the natural generalisation for arbitrary missing digit sets. Indeed, \cite[Theorem~4]{LSV2007} explicitly claims  that the analogue of \eqref{19} for $\mathcal{H}^{s}(W_{b}(\psi)\cap C(b,D))$  holds,  with the  sum  replaced by
 \begin{equation}  \label{sumlsvwrong}
\sum_{n=1}^{\infty}\psi(n)^{s}b^{\gamma n}  \, .  \end{equation}  However,  this turns out to be a careless oversight.
Li, Li and Wu \cite{LLW2025} demonstrated  that the general statement is not always valid.   Specifically, in   \cite[Example 3.1]{LLW2025}, they consider the case where   $b=5$, $D=\{1,2\}$ and $\psi(n)=\frac{1}{4}\cdot5^{-n}$.  They showed that
$$W_{5}(\psi)\cap C(5,\{1,2\})=\emptyset  \, $$
providing a concrete counterexample  to  \cite[Theorem~4]{LSV2007}.  Furthermore, they established  the following corrected form of the general statement which turns out to be  dependent on the quantity:
\begin{equation} \label{quantm}
m_{*}:=\min\left(\min D,b-1-\max D\right)  \, .
\end{equation}

\bigskip

\begin{llw1thm}
For any real number $s\geq0 $,
\begin{equation} \label{sumllw1}
\begin{aligned}
&\quad\mathcal{H}^{s}(W_{b}(\psi)\cap C(b,D)) \\[2ex] &
=  \begin{cases} 0  \ &{\rm if}\quad
\sum\limits_{n\geq1: \psi(n)>\frac{m_{*}}{(b-1)b^{n}}}\left(\psi(n) -\frac{m_{*}}{(b-1)b^{n}}\right)^{s}b^{\gamma n}<\infty, \\[4ex]
\mathcal{H}^{s}(C(b,D)) \ &{\rm if}\quad \sum\limits_{n\geq1: \psi(n) >\frac{m_{*}}{(b-1)b^{n}}}\left(\psi(n) -\frac{m_{*}}{(b-1)b^{n}}\right)^{s}b^{\gamma n}=\infty. \end{cases}
\end{aligned}
\end{equation}
\end{llw1thm}

\noindent In the above, if there are no terms in the sum (as is the case in the above counterexample)  we put the sum equal to zero.   Note that if $D$ contains at least one of the digits  $0$ and $b-1$, then by definition  $m_{*}=0$  and the sum appearing in \eqref{sumllw1} coincides with  \eqref{sumlsvwrong}.  In turn, Theorem~LLW1 implies that \cite[Theorem~4]{LSV2007} is correct with this extra assumption. Indeed, in this case much more is true.  Li, Li and Wu \cite[Theorem 1.5]{LLW2025}  show that the analogous statement is in fact  valid for any  multiplicatively dependent $b$ and $t$ -- not just when $b=t$.  
In other words, they proved that for real number $s\geq0$,
\begin{equation}  \label{llwbt}
\mathcal{H}^{s}(W_{t}(\psi)\cap C(b,D))=\begin{cases} 0  \ &{\rm if}\quad \sum\limits_{n=1}^{\infty}\psi(n)^{s}t^{\gamma n}<\infty, \\[2ex]
\mathcal{H}^{s}(C(b,D))   \ &{\rm if}\quad  \sum\limits_{n=1}^{\infty}\psi(n)^{s}t^{\gamma n}=\infty. \end{cases}
\end{equation}
Thus, it follows that if $b$ and $t$ are multiplicatively dependent and $D$ contains at least one of the digits $0$ and $b-1$, then \eqref{Cdimstate} is  true in general; that is
  \begin{equation}  \label{dimstate}
\dim_{\rm H}(W_{t}(\psi)\cap C(b,D))  \  =  \  \dim_{\rm H}W_{t}(\psi)   \times \dim_{\rm H}C(b,D)  \, .
\end{equation}

\bigskip

We now  turn our attention to  the case that $b$ and $t$ are multiplicatively independent.  Thus,  by definition $\frac{\log b}{\log t}\notin\mathbb{Q}$ and  in addition we assume that $b$ and $t$ have the same prime divisors.  In order to describe the current state of play, we let
\begin{equation}\label{18}
\begin{aligned}
\alpha_{1}= \alpha_{1}(b,t):=\min\left\{\frac{v_{q}(t)}{v_{q}(b)}:q\ {\rm is\ a\ prime\ divisor\ of}\ b\right\},  \\[3ex]
\alpha_{2}=\alpha_{2}(b,t):=\max\left\{\frac{v_{q}(t)}{v_{q}(b)}:q\ {\rm is\ a\ prime\ divisor\ of}\ b\right\},
\end{aligned}
\end{equation}
where $v_{q}(b)$ is the greatest integer such that $q^{v_{q}(b)}$ divides $b$.
With this notation in mind,  Li, Li and Wu \cite[Theorem 1.6]{LLW2025}  obtained a generalisation of Theorem~LLW1 which leads to the following cleaner statement under the assumption  that  $D$ contains at least one of the digits  $0$ and $b-1$.

\begin{llw2thm}
Suppose $b$ and $t$ have the same prime divisors  and the digit set  $D$ contains at least one of the digits $0$ and $b-1$.  Then, for any real number $s\geq0 $,
\begin{equation}  \label{eqn-thmllw}
\begin{aligned}
\mathcal{H}^{s}(W_{t}(\psi)\cap C(b,D))=\begin{cases} 0  \ &{\rm if}\quad  \sum\limits_{n=1}^{\infty} \psi(n)^{s} \ b^{\alpha_{2}\gamma n}<\infty, \\[3ex]
\mathcal{H}^{s}(C(b,D))  \ &{\rm if}\quad    \sum\limits_{n=1}^{\infty}\psi(n)^{s} \ b^{\alpha_{1}\gamma n}=\infty. \end{cases}
\end{aligned}
\end{equation}
\end{llw2thm}

%
%

Several comments are in order.    Firstly, we note that the  statement of Theorem~LLW2 includes the  case where  $b$ and $t$ are multiplicatively dependent. In this case, we have that  $ \alpha_1=  \alpha_2 =  \log t/\log b $,  and thus  sums in \eqref{eqn-thmllw} and  \eqref{llwbt} coincide.   Our interests, however, lies  in the case that $b$ and $t$ are multiplicatively independent.  In this setting,  it is readily verified (see the start o\S\ref{mainresSV} for the details) that
\begin{equation}\label{intersv}
  \alpha_1<\frac{\log t}{\log b}<\alpha_2,
\end{equation}
 which  of course implies the obvious  fact  that $ \alpha_1   <   \alpha_2 $.  The upshot is that the sums for divergence and convergence  in \eqref{eqn-thmllw} do not coincide and this creates a  genuine gap in  Theorem~LLW2. In this paper, we show under additional  restrictions on the digit set $D$ and the function $\psi$, that the $\mathcal{H}^{s}$-measure of $W_{t}(\psi)\cap C(b,D)$ is in fact determined by the behaviour of the  sum involving $\alpha_1$. In other words, under extra conditions,  we refine the  measure zero criterion in Theorem~LLW2  to its optimal form.  The precise statement is given by  Theorem~\ref{12} in \S\ref{mainresSV} below and it constitutes one of our main results.

 Regarding the dimension of  $W_{t}(\psi)\cap C(b,D)$,  if  $b$ and $t$ have the same prime divisors  and the digit set $D$ contains at least one of the digits $0$ and $b-1$,  then  Theorem~LLW2 implies   that
\begin{equation}\label{13}
\begin{aligned}
\frac{\alpha_{1}\log b}{\log t}\dim_{\rm H}W_{t}(\psi)\dim_{\rm H}C(b,D)&  \ \le \   \dim_{\rm H}(W_{t}(\psi)\cap C(b,D)) \\[1ex]
&~\hspace{-8ex}\le  \ \frac{\alpha_{2}\log b}{\log t}\dim_{\rm H}W_{t}(\psi)\dim_{\rm H}C(b,D).
\end{aligned}
\end{equation}
Note that when $b$ and $t$ are  multiplicatively dependent, \eqref{13} clearly coincides with the product formula \eqref{dimstate}  --  that is, the  dimension of the intersection is equal to the product of the dimensions.   On the other hand, note  that when $b$ and $t$ are multiplicatively independent, it  follows  via \eqref{intersv} that if  $\dim_{\rm H}W_{t}(\psi)>0$, then the  lower  bound  in  \eqref{13} for the  dimension of the intersection is strictly less than  the product of the dimensions, while the upper bound is strictly greater.
 Li, Li and Wu proposed in  \cite[Conjecture 6.3]{LLW2025} that the product formula still holds even when  $b$ and $t$ are multiplicatively independent.

\begin{llwconj} 
Suppose $b$ and $t$ have the same prime divisors  and the digit set  $D$ contains at least one of the digits $0$ and $b-1$.  Then,  the product formula  \eqref{dimstate} holds;  that is
$$
\dim_{\rm H}(W_{t}(\psi)\cap C(b,D))  \  =  \  \dim_{\rm H}W_{t}(\psi)   \times \dim_{\rm H}C(b,D)  \, .
$$
\end{llwconj}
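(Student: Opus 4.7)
The plan is to close the gap between the lower and upper estimates in \eqref{13} and recover the product formula. The convergence direction of Theorem~LLW2 gives an upper bound with exponent $\frac{\alpha_{2}\log b}{\log t}$, while the divergence direction gives a lower bound with exponent $\frac{\alpha_{1}\log b}{\log t}$. The strategy is to sharpen both sides so that they meet at the value corresponding to $\dim_{\rm H}W_{t}(\psi)\times\dim_{\rm H}C(b,D)$.

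For the lower bound, I would try to construct a Cantor-like subset of $W_{t}(\psi)\cap C(b,D)$ by a ubiquity / tree-construction argument. At scale $n$, define $k(n)$ by $b^{-k(n)}\asymp\psi(n)$, so that $k(n)\asymp n\log t/\log b$. Within each surviving level-$k(n-1)$ cylinder of $C(b,D)$, I would count rationals $p/t^{n}$ lying so close to $C(b,D)$ that the ball $B(p/t^{n},\psi(n))$ contains at least one full level-$k(n)$ cylinder, and then recurse on those cylinders. Applying the mass distribution principle to the resulting limit set, equipped with measures built from the natural self-similar measure on $C(b,D)$, should yield $\dim_{\rm H}W_{t}(\psi)\times\dim_{\rm H}C(b,D)$ provided the surviving cylinder counts grow at the \emph{average} rate $\log t/\log b$ rather than at the worst-case rate $\alpha_{1}$ extracted in \cite{LLW2025}.

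For the upper bound, I would refine the natural cover of $W_{t}(\psi)\cap C(b,D)$ by sets of the form $B(p/t^{n},\psi(n))\cap C(b,D)$: partition each such intersection into level-$k(n)$ $b$-adic sub-cylinders and estimate their total $\mathcal{H}^{s}$-contribution. If one could show that each ball $B(p/t^{n},\psi(n))$ meets only a bounded number of level-$k(n)$ cylinders of $C(b,D)$, the Hausdorff sum would collapse to $\sum_{n}\psi(n)^{s}\,t^{\gamma n}$, and the convergence exponent would coincide with the product formula.

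The main obstacle, which I expect to be decisive, is that both refinements rely on an equidistribution of the orbit $\{p/t^{n}:0\leq p<t^{n}\}$ with respect to the $b$-adic structure underlying $C(b,D)$. Because $b$ and $t$ share the same prime divisors, the residues of $t^{n}$ modulo $b^{k}$ follow a rigid arithmetic pattern governed by the valuations $v_{q}(t)$ and $v_{q}(b)$ — precisely the pattern from which $\alpha_{1}$ and $\alpha_{2}$ arise in \eqref{18}. It is very plausible that this rigidity systematically biases $p/t^{n}$ either toward or away from $C(b,D)$ according to the residue of $n$ modulo some period, which would obstruct both the cylinder count in the lower bound construction and the bounded-intersection property in the upper bound. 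If this obstruction is genuine, the conjecture must fail on the upper side: the intersection would be dimensionally smaller than the product. Deciding whether this arithmetic rigidity is benign or whether it forces a strict inequality is the critical question any such proof attempt must confront, and — in view of the abstract of the present paper — is the step at which the proposed approach is likely to break down.
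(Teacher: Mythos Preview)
Your instinct in the final paragraph is exactly right: the conjecture is \emph{false}, and this paper disproves it. Your upper bound sketch is actually sound --- it is essentially Corollary~\ref{110}, which bounds $\#\Gamma_n(\psi)$ by a constant times $N_{t^{-n}}(C(b,D))\asymp t^{\gamma n}$ and yields the product as an upper bound for the dimension, with no equidistribution needed. It is the \emph{lower} bound that cannot be pushed up to the product, and for the arithmetic reason you anticipated.

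Under the digit restriction \eqref{restriction}, the paper shows (Proposition~\ref{ub}(ii), via Lemmas~\ref{32} and~\ref{important}) that the number of $p\in\{0,\ldots,t^{n}\}$ with $B(p/t^{n},\psi(n))\cap C(b,D)\neq\emptyset$ is only $\ll b^{\alpha_{1}\gamma n}$, strictly smaller than $t^{\gamma n}$ since $b^{\alpha_{1}}<t$. The rigidity of the valuations $v_{q}(t)/v_{q}(b)$ forces every such $p/t^{n}$ to be an endpoint of a basic interval of $C(b,D)$ at level roughly $\alpha_{1}n$, not level $(\log t/\log b)\,n$; your tree construction therefore cannot find enough well-placed $t$-adic rationals in each surviving cylinder to reach the product dimension. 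This sharper count matches the lower bound from the divergence half of Theorem~LLW2 and gives Corollary~\ref{16}:
\[
\dim_{\rm H}\bigl(W_{t}(\psi)\cap C(b,D)\bigr)=\frac{\alpha_{1}\log b}{\log t}\,\dim_{\rm H}W_{t}(\psi)\,\dim_{\rm H}C(b,D)<\dim_{\rm H}W_{t}(\psi)\,\dim_{\rm H}C(b,D).
\]
A concrete counterexample to Conjecture~LLW is $b=6$, $t=12$, $D=\{0,1,4,5\}$ (see \S\ref{exposecond}).
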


\noindent We show that this conjecture is false. A straightforward consequence of our measure result (Theorem~\ref{12} in \S\ref{mainresSV} below) is that for any $b$ and $t$ have the same prime divisors,  there are digit sets $D$ for which the upper bound in \eqref{13}  can be improved so that it coincides with the lower bound;  that is
\begin{equation}   \label{dimintro}
\dim_{\rm H}(W_{t}(\psi)\cap C(b,D)) \  =  \ \frac{\alpha_{1}\log b}{\log t}\dim_{\rm H}W_{t}(\psi)\dim_{\rm H}C(b,D)  \, .
\end{equation}
In particular,  in view of \eqref{intersv},  this implies that the dimension of the intersection is strictly less than the product of the dimensions.  Indeed, it is not difficult to construct  functions $\psi $ for which the dimension of the intersection behaves as though the sets in question are “independent” or, equivalently “random”  -- see \cite[Chapter~8]{F2003}.  That is,
 \begin{equation} \label{randomdim} \dim_{\rm H}(W_{t}(\psi)\cap C(b,D)) \  =  \ \dim_{\rm H}W_{t}(\psi)+\dim_{\rm H}C(b,D)-1  \, .  \end{equation}
 For completeness, an explicit example of this  phenomenon  is provided  in  $\S\ref{mainresSV}\!\!: {\rm Remark}~\ref{randomdimrem}$.   Although obvious, it is worth noting that when  the dimension of the intersection  satisfies the product formula   \eqref{dimstate},  it is impossible for
$\dim_{\rm H}( W_{t}(\psi)\cap C(b,D))   $ to satisfy \eqref{randomdim} for any choice of  $\psi$ when both $W_{t}(\psi)$ and $C(b,D)$ have dimensions strictly less than one; that is, the interesting situation.     From a more general perspective ,  we establish a  result (see  \S\ref{mainresSV}: Corollary~\ref{110})  concerning the intersection of $W_{t}(\psi)$ with an  arbitrary self similar set, which implies  that
 \begin{equation}  \label{genupper}
\dim_{\rm H}(W_{t}(\psi)\cap C(b,D))  \ \le  \  \dim_{\rm H}W_{t}(\psi)   \times \dim_{\rm H}C(b,D)  \,
\end{equation}
  regardless of the values of $b$ and  $t$ and the composition of the digit set $D$.
  This clearly   improves the upper bound in \eqref{13}  irrespective of whether  or not $ b$ and $t$ have the same prime divisors, and weather or not $D$ contains either of the digits $0$ and $b-1$.

Although not addressed in this paper, for the sake of completeness,   we briefly comment on the  remaining case in which  $b$ and $t$ are multiplicatively independent and  do not share the same prime divisors.  Apart from the general upper bound \eqref{genupper}, which obviously holds in this case  
to the best of our knowledge,  existing measure theoretic  results are largely confined to the specific setting where     $b=3$, $t=2$, $D=\{0,2\}$ and   $\mathcal{H}^{s}= \mathcal{H}^{\gamma} $.  That is,  when $C(b,D)$ is the standard middle-third Cantor set $C$  and $\mathcal{H}^{s}$ is the standard Cantor measure $\mu$ supported on $C$.  In this context,   Allen et al  \cite[Theorem 2]{ABCY2023} have shown  that  $$ \mu(W_{2}(\psi_{\tau})\cap C)=0    \qquad {\rm if }  \qquad  \tau\geq\frac{0.922(1-\gamma)+1}{\gamma(2-\gamma)}  \,  ,  $$
where, for $\tau \ge 0$, the function $\psi_\tau$  is defined by  $ \psi_\tau(n) :=   2^{-n} n^{-\tau}  \, .
$
 On the other hand, Baker \cite{B2024} has established the complementary full-measure result:
  \begin{equation}  \label{bake}  \mu(W_{2}(\psi_{\tau})\cap C)= 1    \qquad {\rm if }  \qquad   0  \le \tau \le 0.01    \,  .  \end{equation}
 This result represents a significant refinement  over the first result of its type proved in \cite{ACY2023}.  In fact,    Baker \cite[Theorem 1.5]{B2024} proves  the following  much  stronger quantitative  version of the full measure statement:     for any sequence $(x_{n})_{n \in \mathbb{N}}$ of real numbers in $[0,1)$, we have
    $$\lim_{N\to\infty}\frac{\#\{1\le n\le N:\|2^{n}x-x_{n}\|<n^{-0.01}\}}{2\sum_{n=1}^{N}n^{-0.01}}=1   \, , $$
 for $\mu$-almost every $x\in C$, where $\| \ \cdot  \ \| $ denotes the distance to the nearest integer.  To see that this implies  \eqref{bake}, observe  that $x \in W_{2}(\psi_{\tau}) $   if and only if  $\#\{1\le n\le N:\|2^{n}x \|<n^{-\tau}\}   \to  \infty $ as $N \to \infty$.  In terms of dimension,  \eqref{bake} together with \eqref{dim-order}  implies that:   $$
    \dim_{\rm H}(W_{2}(\psi_{\tau} )\cap C)=\dim_{\rm H}C   =  \dim_{\rm H}C \times   \dim_{\rm H}(W_{2}(\psi_{\tau} )    \qquad {\rm if }  \qquad   0  \le \tau \le 0.01    \, .
    $$
    It turns out that Baker's approach can be  generalised and  refined   to   give the following broader statement.  Let  $b\geq3$  be a prime number and $t\geq2$ be an integer such that $b\nmid t$. 
    Let $\mu$ be any self-similar measure supported on   $C(b,D)$.   Then there exists an explicit, computable  constant $\tau_0>0$ such that for any  $\tau\in(0,\tau_0)$ and any sequence $(x_{n})_{n=1}^{\infty}$  of real numbers in $[0,1)$,
$$\lim_{N\to\infty}\frac{\#\{1\le n\le N:\|t^{n}x-x_{n}\|<n^{-\tau}\}}{2\sum_{n=1}^{N}n^{-\tau}}=1 $$
for $\mu$-almost  every $x\in C(b,D)$.   In the specific  setup considered by Baker (i.e.  $b=3$, $t=2$, $D=\{0,2\}$ and   $\mu= \mathcal{H}^{\gamma} $) the constant $ \tau_0 $  can be taken to be $0.013$  -- a modest improvement over Baker's bound of $0.01$.  We plan to provide the details of this and the general statement  in a forthcoming article.

\subsection{Main results}  \label{mainresSV}


In order to state our main measure theoretic result that closes the gap  in Theorem~LLW2,  we need to impose certain conditions on the digit set $D$.
With this in mind,  suppose $b$ and $t$ are multiplicatively independent and have the same prime divisors, say $q_{1},q_{2},\ldots,q_{K}$. Thus,
$$b=q_{1}^{v_{q_{1}}(b)}q_{2}^{v_{q_{2}}(b)}\ldots q_{K}^{v_{q_{K}}(b)}$$
and
$$t=q_{1}^{v_{q_{1}}(t)}q_{2}^{v_{q_{2}}(t)}\ldots q_{K}^{v_{q_{K}}(t)}$$
where as in  \eqref{18}   the quantity $v_{q}(b)$ is the greatest integer such that $q^{v_{q}(b)}$ divides $b$.    Before describing the required  conditions on $D$, which will  in part be determined by  the quantities $\alpha_1$ and $\alpha_2$ defined in  \eqref{18},  we first observe  that by definition we have that 
$$\alpha_1   \ \le \ \frac{v_{q_{j}}(t)}{v_{q_{j}}(b)}  \ \le  \ \alpha_2\qquad {\rm for\ all}\quad 1\le j\le K.$$  Hence, it follows that
$$ b^{\alpha_1}=  q_{1}^{\alpha_1v_{q_{1}}(b)}\times\ldots\times q_{K}^{\alpha_1v_{q_{K}}(b)}    \ \le  \ t  \ \le \  q_{1}^{\alpha_2v_{q_{1}}(b)}\times\ldots\times q_{K}^{\alpha_2v_{q_{K}}(b)}=b^{\alpha_2}.$$ Thus,
$\alpha_1\le\frac{\log t}{\log b}\le\alpha_2 $. Since    $\frac{\log t}{\log b}\notin\mathbb{Q}$ by the multiplicative independence of $b$ and $t$,    and  $\alpha_1,\alpha_2\in\mathbb{Q}$, we conclude that    \eqref{intersv} holds. Having verified \eqref{intersv}, we now return to the main  task  of describing the conditions on the digit set
$D$.    It follows from $\eqref{18}$ that there exists $1\le j\le K$, such that 
$$\frac{v_{q_{j}}(t)}{v_{q_{j}}(b)}=\alpha_{1}.$$
On the other hand, since  $b$ and $t$ are multiplicatively independent, there exists $1\le i\le K$, such that
\begin{equation*}
\frac{v_{q_{i}}(t)}{v_{q_{i}}(b)}>\alpha_{1}
\end{equation*}
Therefore, 
\begin{equation}\label{k_{*}}
1  \,  \le\,   k_{*}:=\#\left\{1\le i\le K: \frac{v_{q_{i}}(t)}{v_{q_{i}}(b)}>\alpha_{1}\right\}   \, \le  \, K-1  \, .  
\end{equation}
By reordering if necessary, without loss of generality,  we can assume that 
\begin{equation}\label{assumption}
 \frac{v_{q_{i}}(t)}{v_{q_{i}}(b)}>\alpha_{1},    \  \  \ \forall  \ \ 1\le i\le k_{*} \, , \qquad  {\rm and}\qquad \frac{v_{q_{j}}(t)}{v_{q_{j}}(b)}=\alpha_{1},   \ \ \ \forall\  \ k_{*}+1\le j\le K.
\end{equation}
With this is mind, let 
\begin{equation}\label{b_{*}}
b_{*}:=\frac{b}{q_{1}^{v_{q_{1}}(b)}\ldots q_{k_{*}}^{v_{q_{k_{*}}}(b)}}  \, , 
\end{equation}
and in turn, let 
\begin{equation}\label{D_{1}}
 D_{1}:=\left\{kb_{*}:\  1\leq k\leq\frac{b}{b_{*}}-1 \right\}
\end{equation}
and
\begin{equation}\label{D_{2}}
D_{2}:=\left\{kb_{*}-1:\  1\leq k\leq\frac{b}{b_{*}}-1 \right\}
\end{equation}
Note that $D_{2}= D_{1}-\{1\}$. We are now  in the position to state our main measure theoretic result. Throughout, $\lceil\cdot\rceil$ denotes as usual  the ceiling function.

\begin{theorem}\label{12sv}
Suppose $b$ and $t$  have the same prime divisors and  that $\psi:\mathbb{N}\to(0,\infty)$  satisfies $\psi(n)\le b^{-\lceil\alpha_{2}n\rceil-1}$  for $n$ sufficiently large.
\begin{itemize}
\item[(i)]  If the digit set  $D$ does not contain  $0$ and $b-1$, then
\begin{equation} \label{emptyre} W_{t}(\psi)\cap C(b,D)=\emptyset.
\end{equation}

    \item[(ii)]  If  $b$ and $t$ are multiplicatively independent and  the digit set    $D$  satisfies
\begin{equation}\label{restriction}
    D\subseteq\{0,1,\ldots,b-1\}\setminus(D_{1}\cup D_{2})   \, , 
\end{equation}
then,  for any real number $s\geq0$, 
\begin{equation*}
\mathcal{H}^{s}(W_{t}(\psi)\cap C(b,D))=  0 \quad  {\rm if}\quad  \sum\limits_{n=1}^{\infty} \psi(n)^{s}b^{\alpha_{1}\gamma n}<\infty  \, . 
\end{equation*}
In addition, if $\alpha_{1}\in\mathbb{N}$, then
$W_{t}(\psi)\cap C(b,D)=W_{b^{\alpha_{1}}}(\psi)\cap C(b,D).$

\end{itemize}
\end{theorem}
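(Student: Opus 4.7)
Plan: We prove parts (i) and (ii) in turn. For (i), the key tool is a direct base-$b$ digit analysis. Suppose for contradiction that $x\in C(b,D)\cap W_{t}(\psi)$, and fix $(p,n)$ with $|x-p/t^{n}|<\psi(n)\le b^{-m-1}$ where $m:=\lceil\alpha_{2}n\rceil$. Because $b$ and $t$ share prime divisors, $M_{n}:=b^{m}/t^{n}$ is a positive integer, so $p/t^{n}=pM_{n}/b^{m}$ is a $b$-adic rational. Comparing with the level-$m$ $b$-ary cylinder containing $x$, we see that $pM_{n}\in\{p^{*},p^{*}+1\}$ where $p^{*}:=\lfloor xb^{m}\rfloor$, and the strict inequality $|x-pM_{n}/b^{m}|<b^{-m-1}$ forces the $(m+1)$-st base-$b$ digit of $x$ to equal $0$ (Case A, $pM_{n}=p^{*}$) or $b-1$ (Case B, $pM_{n}=p^{*}+1$). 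By hypothesis neither digit lies in $D$, contradicting $x\in C(b,D)$, and \eqref{emptyre} follows.

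For (ii), we use a direct cover of $W_{t}(\psi)$. Write $I(p,n):=(p/t^{n}-\psi(n),p/t^{n}+\psi(n))$, so that $W_{t}(\psi)\subseteq\limsup_{n}\bigcup_{p}I(p,n)$, and let $N_{n}$ denote the number of $p\in\mathbb{Z}\cap[0,t^{n}]$ with $I(p,n)\cap C(b,D)\neq\emptyset$. The standard Hausdorff-measure bound
\[\mathcal{H}^{s}(W_{t}(\psi)\cap C(b,D))\le\liminf_{N\to\infty}\sum_{n\ge N}N_{n}(2\psi(n))^{s}\]
reduces the first assertion of (ii) to establishing $N_{n}\le C\cdot\#D^{\alpha_{1}n}=C\cdot b^{\alpha_{1}\gamma n}$ for all large $n$, since the right-hand side is then controlled by the tail of the hypothesised convergent series $\sum\psi(n)^{s}b^{\alpha_{1}\gamma n}$ and vanishes as $N\to\infty$.

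To bound $N_{n}$, we continue the Case A/B dichotomy of (i). For every prime $q_{j}$ with $j>k_{*}$ one has $v_{q_{j}}(M_{n})=v_{q_{j}}(b)(m-\alpha_{1}n)$, so $b_{*}^{L}\mid M_{n}$ (hence $b_{*}^{L}\mid p^{*}$ in Case A and $b_{*}^{L}\mid p^{*}+1$ in Case B), where $L:=\lfloor m-\alpha_{1}n\rfloor$. The combinatorial heart of the argument is the following lemma, proved by induction on $L$: \emph{any integer $B=\sum_{k=1}^{L}D_{k}b^{L-k}$ with $D_{k}\in D\subseteq\{0,\ldots,b-1\}\setminus D_{1}$ and $b_{*}^{L}\mid B$ must satisfy $B=0$}. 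Indeed, reducing $B$ modulo $b_{*}$ gives $b_{*}\mid D_{L}$, so $D_{L}\in\{0\}\cup D_{1}$; the hypothesis $D\cap D_{1}=\emptyset$ forces $D_{L}=0$, after which one factors out $b$ and applies the inductive hypothesis using $\gcd(b_{*},b/b_{*})=1$. Applied to Case A, the last $L$ base-$b$ digits of $p^{*}$ all vanish, so $p^{*}$ is determined by its first $m-L=\lceil\alpha_{1}n\rceil$ digits in $D$, contributing at most $\#D\cdot b^{\alpha_{1}\gamma n}$ values. In Case B the carry from adding $1$ to $p^{*}$ requires a split: if the trailing run of $(b-1)$'s in $p^{*}$ has length $r<L$, a variant of the lemma shows the lowest nonzero low-digit of $p^{*}+1$, namely $d_{m-r}+1$, must be divisible by $b_{*}$, forcing $d_{m-r}\in D_{2}$ and contradicting $D\cap D_{2}=\emptyset$; otherwise $r\ge L$, in which case the last $L$ digits of $p^{*}$ are all $b-1$ and the same counting bound applies. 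Combining, $N_{n}\le 2\#D\cdot b^{\alpha_{1}\gamma n}$, completing the first assertion of (ii).

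For the final assertion, assume $\alpha_{1}\in\mathbb{N}$ and set $\tau_{n}:=t^{n}/b^{\alpha_{1}n}=\prod_{i\le k_{*}}q_{i}^{n(v_{q_{i}}(t)-\alpha_{1}v_{q_{i}}(b))}$, a positive integer. The inclusion $W_{b^{\alpha_{1}}}(\psi)\subseteq W_{t}(\psi)$ is automatic from $p'/b^{\alpha_{1}n}=p'\tau_{n}/t^{n}$. For the reverse inclusion restricted to $C(b,D)$, the digit analysis above shows $b^{L}\mid p^{*}$ (Case A) or $b^{L}\mid p^{*}+1$ (Case B), so $pM_{n}=b^{L}q$ for some integer $q$; a direct prime-by-prime computation gives $b^{L}/M_{n}=\tau_{n}$, whence $p=\tau_{n}q$ and $p/t^{n}=q/b^{\alpha_{1}n}$, placing $x$ in $W_{b^{\alpha_{1}}}(\psi)$. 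The main obstacle of the proof is the Case B analysis: the carry from $p^{*}\mapsto p^{*}+1$ produces several sub-cases indexed by the trailing $(b-1)$-run length, and the hypothesis $D\cap D_{2}=\emptyset$ is precisely the input that rules out the "intermediate" sub-cases, leaving only the symmetric extremal sub-case that parallels Case A.
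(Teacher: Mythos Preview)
Your proposal is correct and follows the same overall strategy as the paper: a base-$b$ digit/endpoint analysis showing that any $p/t^{n}$ within $\psi(n)$ of $C(b,D)$ must be a level-$m$ endpoint, the same combinatorial lemma on $b_{*}$-divisibility (your inductive lemma is exactly the paper's Lemma~\ref{32}(i), and your Case~B carry analysis reproduces Lemma~\ref{32}(ii)), and the standard $\limsup$ covering bound for $\mathcal{H}^{s}$. Your Case A/B split matches the paper's left/right endpoint dichotomy, and the ``In addition'' argument via $p=\tau_{n}q$ coincides with the paper's.

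One point where your route is cleaner than the paper's: to cope with $\alpha_{1}n\notin\mathbb{Z}$, the paper writes $\alpha_{1}=l_{1}/l_{0}$, decomposes $n=l_{0}\tilde{n}+r$, and replaces the natural level $m=\lceil\alpha_{2}n\rceil$ by a modified level $m_{0}=\lceil\alpha_{2}l_{0}\tilde{n}\rceil+\lceil\alpha_{2}r\rceil$ precisely so that the exponent $\lceil\alpha_{2}l_{0}\tilde{n}\rceil-\alpha_{1}l_{0}\tilde{n}$ is an integer (Lemma~\ref{important}). You instead set $L=\lfloor m-\alpha_{1}n\rfloor$ and verify $b_{*}^{L}\mid M_{n}$ directly from $v_{q_{j}}(M_{n})=v_{q_{j}}(b)(m-\alpha_{1}n)$ for $j>k_{*}$. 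This sidesteps the $l_{0},\tilde{n},r$ bookkeeping entirely and even gives a marginally sharper exponent ($m-L=\lceil\alpha_{1}n\rceil$ versus the paper's $\alpha_{1}l_{0}\tilde{n}+\lceil\alpha_{2}r\rceil$); both yield $N_{n}\ll b^{\alpha_{1}\gamma n}$, but your formulation is more direct.
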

 
 \bigskip 

\begin{remark}
It is worth emphasizing that in part (ii) the quantities 
$b$ and $t$ are multiplicatively independent. Consequently, in the “In addition” statement $ t\neq b^{\alpha_{1}}$.  However, since $b$ and $b^{\alpha_{1}}$ are multiplicatively dependent  when $\alpha_1$ is an integer, the set equality together with  \eqref{llwbt} implies that the  complementary divergent statement holds;  that is 
$$
\mathcal{H}^{s}(W_{t}(\psi)\cap C(b,D))=  \infty  \quad  {\rm if}\quad  \textstyle{\sum_{n=1}^{\infty} } \psi(n)^{s}b^{\alpha_{1}\gamma n} = \infty  \, . 
$$
Although this is immediate, we also note that since
 $b_{*}\geq2$, it follows from the definition of $D_{1}$ and $D_{2}$ that both $0$ and $b-1$  belong to the right hand side of \eqref{restriction}. Consequently, we can always ensure that  $\#D\geq2$. 
\end{remark}

\bigskip 
 
 The first part  of Theorem~\ref{12sv} is essentially obvious but it does clarify the necessity of the  condition that  $D$   contains at least one of the digits  $0$ or $b-1$ in Theorem~LLW2.  It is worth pointing out that  Li, Li and Wu  gave a concrete example in the multiplicatively dependent case (\!\!\cite[Example~3.1]{LLW2025} in which  $b=t=5$ and $D=\{1,2\}$)  to show that the condition in their theorem is  necessary.  Part (i) shows that it is necessary generically even in  the  multiplicatively independent case. To the best of our knowledge,   it is not known if the condition  that  $D$  contains at least one of the digits  $0$ or $b-1$ is necessary in Theorem~LLW2 (or indeed Theorem~\ref{12} below) if $\psi:\mathbb{N}\to(0,\infty)$  satisfies     $\psi(n) > c \,   b^{-\lceil\alpha_{2}n\rceil-1}$ for $c >1 $ large. For the sake of completeness, we mention that, under the assumption that $D$ does not contain  $0$ and $b-1$,  part (i) can be slightly improved to the statement that  \eqref{emptyre} holds if for $n$ sufficiently large, 
 $$\psi(n)\le \frac{m_{*}}{(b-1)b^{\lceil\alpha_{2}n\rceil}}  \qquad  {\rm where }   \qquad m_{*}=\min\left(\min D,b-1-\max D\right)   \, . $$    To conclude this discussion concerning part (i) of the theorem, we note that that  the conclusion is not true if   $b$ and $t$ 
 and do not have the same prime divisors. To see this, let  $b $ be an odd number, $t $ be an even number and suppose that  the digit  $\frac{b-1}{2}\in D$. Then, it follows that the point 
$\frac{1}{2}\in C(b,D)  $
and for any $\psi$, we have that  
$$\textstyle{\left|\frac{1}{2}-\frac{\frac{1}{2}t^{n}}{t^{n}}\right|=0<\psi(n)}   \qquad {\rm for \  all \ }  n \in \mathbb{N}   \, .    $$
Therefore,  $\frac{1}{2}\in W_{t}(\psi)\cap C(b,D).$

The significance of condition \eqref{restriction} in the second part of Theorem~\ref{12sv}  will become evident in the course of the proof. Nonetheless, the specific example presented below in \S\ref{exposecond} serves to highlight both its presence  and perhaps our failure to notice the obvious.  The following ``gap'' free Hausdorff measure criterion for the size of $W_{t}(\psi)\cap C(b,D)$  is a direct consequence of combining Theorem~\ref{12sv}  with Theorem~LLW2.

\begin{theorem}\label{12}
Suppose $b$ and $t$ are multiplicatively independent and have the same prime divisors and that the  digit sets $D$ contains at least one of $0$ and $b-1$.   Furthermore, suppose that $D$ satisfies 
\eqref{restriction} and that $\psi:\mathbb{N}\to(0,\infty)$  satisfies $\psi(n)\le b^{-\lceil\alpha_{2}n\rceil-1}$  for $n$ sufficiently large.   Then,  for any real number $s\geq0$,
\begin{equation*}
\begin{aligned}
\mathcal{H}^{s}(W_{t}(\psi)\cap C(b,D))=\begin{cases}0 \ &{\rm if}\quad \sum\limits_{n=1}^{\infty} \psi(n)^{s}b^{\alpha_{1}\gamma n}<\infty, \\[2ex]
+ \infty \  &{\rm if}\quad \sum\limits_{n=1}^{\infty}\psi(n)^{s}b^{\alpha_{1}\gamma n}=\infty. \end{cases}
\end{aligned}
\end{equation*}
\end{theorem}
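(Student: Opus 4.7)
The plan is to obtain Theorem~\ref{12} as an immediate synthesis of Theorem~\ref{12sv}(ii) with the divergence half of Theorem~LLW2. The two results complement each other perfectly: Theorem~\ref{12sv}(ii) supplies the sharp measure--zero criterion that was missing from Theorem~LLW2, while Theorem~LLW2 provides the full--measure (hence infinite--measure, in the right $s$ range) criterion that is not addressed in Theorem~\ref{12sv}.

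First I would dispose of the convergent case. If $\sum_{n\geq1}\psi(n)^{s}b^{\alpha_{1}\gamma n}<\infty$, then every hypothesis of Theorem~\ref{12sv}(ii) holds verbatim---namely, $b$ and $t$ are multiplicatively independent with the same prime divisors, $D$ satisfies \eqref{restriction}, and $\psi(n)\le b^{-\lceil \alpha_{2}n\rceil - 1}$ for $n$ large---so Theorem~\ref{12sv}(ii) directly yields $\mathcal{H}^{s}(W_{t}(\psi)\cap C(b,D))=0$.

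For the divergent case, assume $\sum_{n\geq 1}\psi(n)^{s}b^{\alpha_{1}\gamma n}=\infty$. Since $D$ contains at least one of the digits $0$ or $b-1$ and $b,t$ share the same prime divisors, Theorem~LLW2 applies and gives
\[
\mathcal{H}^{s}\bigl(W_{t}(\psi)\cap C(b,D)\bigr)=\mathcal{H}^{s}\bigl(C(b,D)\bigr).
\]
It remains to argue that $\mathcal{H}^{s}(C(b,D))=+\infty$. From $\psi(n)\le b^{-\lceil \alpha_{2}n\rceil-1}\le b^{-\alpha_{2}n-1}$ for $n$ large, each term of the divergent series is bounded above by
\[
\psi(n)^{s}b^{\alpha_{1}\gamma n}\le b^{-s}\cdot b^{n(\alpha_{1}\gamma - s\alpha_{2})}.
\]
Divergence therefore forces $\alpha_{1}\gamma - s\alpha_{2}\ge 0$, i.e.\ $s\le \alpha_{1}\gamma/\alpha_{2}$. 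By \eqref{intersv}, $\alpha_{1}/\alpha_{2}<1$ strictly under multiplicative independence, and so $s<\gamma=\dim_{\rm H}C(b,D)$. A standard property of Hausdorff measure then gives $\mathcal{H}^{s}(C(b,D))=+\infty$, finishing the divergent case.

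There is essentially no substantive obstacle here: all the heavy lifting is already packaged in Theorem~\ref{12sv}, and the only subtle point is the elementary observation that the decay hypothesis on $\psi$ automatically confines the range of divergence to $s<\gamma$, the range in which $C(b,D)$ has infinite $\mathcal{H}^{s}$-measure.
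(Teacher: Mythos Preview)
Your proposal is correct and follows exactly the approach indicated in the paper: the convergent case is Theorem~\ref{12sv}(ii) verbatim, and the divergent case is Theorem~LLW2 together with the observation (which the paper records in the remark immediately following Theorem~\ref{12}) that the growth hypothesis on $\psi$ forces $s\le \alpha_1\gamma/\alpha_2<\gamma$, whence $\mathcal{H}^s(C(b,D))=+\infty$. There is nothing to add.
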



\bigskip

\noindent We emphasize that, in view of part (i) of Theorem~\ref{12sv}, it is absolutely necessary for $D$
 to contain at least one of the digits $0$ or 
$b-1$ for the divergent part of Theorem~\ref{12} to hold.
On the other hand, the growth condition on $\psi$, as well as the requirement that $D$ satisfies condition~\eqref{restriction}, are only needed for the convergent part of the theorem.

\begin{remark} 
Note that in view of the condition on $\psi$ in Theorem~\ref{12}, we  have that 
$$ \textstyle{
\sum\limits_{n=1}^{\infty}\psi(n)^{s}b^{\alpha_{1}\gamma n}   \ \le \   \sum\limits_{n=1}^{\infty} b^{(\alpha_{1}\gamma - \alpha_2 s)n }  \, . }
$$
Now  $ \alpha_1 < \alpha_2$  if   $b$ and $t$ are multiplicatively independent. Thus,   for the sum on the left to have any chance of diverging we must have that $ s\leq\frac{\alpha_1\gamma}{\alpha_2} < \gamma= \dim_{\rm H} C(b,D)$  and so by the  definition of Hausdorff dimension  $\mathcal{H}^{s}(C(b,D))=\infty$ in divergent case  of the theorem. 
\end{remark}

As a consequence of Theorem $\ref{12}$, we immediately obtain the following ``dimension'' statement which shows that Conjecture~LLW is false.

\begin{corollary}\label{16}
Suppose $b$ and $t$ are multiplicatively independent and have the same prime divisors and that the digit set $D$ contains at least one of $0$ and $b-1$.   Furthermore, suppose that $D$ satisfies 
\eqref{restriction} and that $\psi:\mathbb{N}\to(0,\infty)$  satisfies $\psi(n)\le b^{-\lceil\alpha_{2}n\rceil-1}$  for $n$ sufficiently large.   Then  
\begin{equation*}
\dim_{\rm H}(W_{t}(\psi)\cap C(b,D))=\frac{\alpha_{1}\log b}{\log t}\dim_{\rm H}W_{t}(\psi)\dim_{\rm H}C(b,D).
\end{equation*}
\end{corollary}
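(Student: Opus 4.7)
The plan is to extract the dimension from the Hausdorff measure dichotomy provided by Theorem~\ref{12}, using the standard characterisation
$$\dim_{\rm H}E=\inf\{s\ge0:\mathcal{H}^{s}(E)=0\}=\sup\{s\ge0:\mathcal{H}^{s}(E)=+\infty\}.$$
Set $E:=W_{t}(\psi)\cap C(b,D)$. Under the hypotheses inherited from Theorem~\ref{12}, the Hausdorff dimension of $E$ is precisely the critical exponent $s_{0}$ at which the series $S(s):=\sum_{n\ge1}\psi(n)^{s}\,b^{\alpha_{1}\gamma n}$ transitions from divergence to convergence. So the whole task reduces to identifying this critical exponent in terms of $\lambda_{\psi}$ defined in \eqref{dim-order} and then rearranging algebraically.

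For the upper bound on $\dim_{\rm H}E$, I would fix $\varepsilon>0$ and use the liminf definition of $\lambda_{\psi}$: for all sufficiently large $n$ one has $\psi(n)<t^{-(\lambda_{\psi}-\varepsilon)n}$. Substituting into $S(s)$ bounds it above (up to finitely many terms) by a geometric series $\sum_{n}\exp\bigl(n[-s(\lambda_{\psi}-\varepsilon)\log t+\alpha_{1}\gamma\log b]\bigr)$, which converges whenever $s>\tfrac{\alpha_{1}\gamma\log b}{(\lambda_{\psi}-\varepsilon)\log t}$. The convergent part of Theorem~\ref{12} then forces $\mathcal{H}^{s}(E)=0$, so $\dim_{\rm H}E\le\tfrac{\alpha_{1}\gamma\log b}{(\lambda_{\psi}-\varepsilon)\log t}$; letting $\varepsilon\to 0$ gives the upper bound. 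I would also check at this point that the required hypothesis $\psi(n)\le b^{-\lceil\alpha_{2}n\rceil-1}$ is compatible: it forces $\lambda_{\psi}\ge\alpha_{2}\log b/\log t>1$, so we are squarely in the regime where $\dim_{\rm H}W_{t}(\psi)=1/\lambda_{\psi}$.

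For the lower bound, the same liminf yields an infinite sequence $n_{k}\to\infty$ along which $\psi(n_{k})>t^{-(\lambda_{\psi}+\varepsilon)n_{k}}$. Hence for those indices $\psi(n_{k})^{s}\,b^{\alpha_{1}\gamma n_{k}}>\exp\bigl(n_{k}[-s(\lambda_{\psi}+\varepsilon)\log t+\alpha_{1}\gamma\log b]\bigr)$. Whenever $s<\tfrac{\alpha_{1}\gamma\log b}{(\lambda_{\psi}+\varepsilon)\log t}$ this exponent is positive, so the terms along $(n_{k})$ tend to infinity and $S(s)=\infty$. The divergent part of Theorem~\ref{12} therefore gives $\mathcal{H}^{s}(E)=+\infty$ and so $\dim_{\rm H}E\ge s$; letting first $s$ approach the threshold and then $\varepsilon\to0$ completes the matching lower bound. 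Combining the two bounds yields
$$\dim_{\rm H}E=\frac{\alpha_{1}\gamma\log b}{\lambda_{\psi}\log t}.$$

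To finish, I substitute $\gamma=\dim_{\rm H}C(b,D)$ from \eqref{dim-cantorSV} and $1/\lambda_{\psi}=\dim_{\rm H}W_{t}(\psi)$ from \eqref{dim-order} (valid because $\lambda_{\psi}>1$ as noted above), which rearranges the displayed formula into the form stated in the corollary. The only real obstacle is the care required in passing from a liminf growth condition on $\psi$ to a sharp convergence/divergence threshold for $S(s)$; as usual for this type of argument the upper bound is clean whereas the lower bound must exploit the existence of a sparse subsequence $(n_{k})$ along which $\psi$ is large, rather than any pointwise lower bound on $\psi(n)$.
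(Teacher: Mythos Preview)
Your proof is correct and is precisely the standard extraction of dimension from the zero--infinity measure dichotomy of Theorem~\ref{12}; the paper itself does not spell out the argument but merely says the corollary follows ``immediately'' from Theorem~\ref{12}, so you have supplied the details the authors omitted. One tiny wrinkle worth a sentence: your upper-bound paragraph silently assumes $\lambda_{\psi}<\infty$ (otherwise $\lambda_{\psi}-\varepsilon$ is meaningless), but if $\lambda_{\psi}=\infty$ the same inequality $\psi(n)<t^{-Mn}$ for arbitrary $M$ forces $S(s)<\infty$ for every $s>0$, giving $\dim_{\rm H}E=0$, which matches the right-hand side since $\dim_{\rm H}W_{t}(\psi)=0$.
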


We now turn our attention to stating our main ``global'' lower bound dimension result  that deals with  the intersection of $W_{t}(\psi)$ with an  arbitrary non-empty subset $A$ of $[0,1]$. Given $A$ and $\delta > 0$,  let $N_{\delta}(A)$ be the smallest
number of sets of diameter at most $\delta$ which cover $A$. The following theorem give a sufficient condition for the $s$-dimensional Hausdorff measure of $W_{t}(\psi)\cap A$ to be zero.

\begin{theorem}\label{11}
Let $A$ be a non-empty subset of $[0,1]$ and   $\psi:\mathbb{N}\to(0,\infty)$ be a function. Then, for any real number $s>0$, 
\begin{equation*}
\mathcal{H}^{s}(W_{t}(\psi)\cap A)=0  \quad  {\rm if}  \quad  \sum\limits_{n=1}^{\infty}\psi(n)^{s}N_{t^{-n}}(A)<\infty  \, . 
\end{equation*}
\end{theorem}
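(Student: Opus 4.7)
The plan is a direct Hausdorff--Cantelli style covering argument exploiting the $\limsup$ structure of $W_{t}(\psi)$. Writing
$$W_{t}(\psi)\cap A \ = \ \bigcap_{N=1}^{\infty}\bigcup_{n=N}^{\infty}\bigcup_{p\in\mathbb{Z}}\bigl(B(p/t^{n},\psi(n))\cap A\bigr),$$
one sees that for every $N$ the set $W_{t}(\psi)\cap A$ is covered by those balls $B(p/t^{n},\psi(n))$, $n\ge N$, that actually meet $A$. The task therefore reduces to bounding, at each level $n$, the number of such rational centres by a multiple of $N_{t^{-n}}(A)$, and then summing their $s$-th powers. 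Since $N_{t^{-n}}(A)\ge 1$ and the given sum converges, one also gets $\psi(n)\to 0$. I would then split the argument according to whether $\psi(n)\le t^{-n}$ holds for all sufficiently large $n$ or not.

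If $\psi(n)>t^{-n}$ for infinitely many $n$, then at each such level the balls $B(p/t^{n},\psi(n))$ with $0\le p\le t^{n}$ already cover $[0,1]$, so $W_{t}(\psi)=[0,1]$ and hence $W_{t}(\psi)\cap A=A$. In that case convergence of the series restricted to the subsequence $\{n:\psi(n)>t^{-n}\}$ forces $t^{-ns}N_{t^{-n}}(A)\to 0$ along this subsequence, and using the cover of $A$ witnessing $N_{t^{-n}}(A)$ immediately yields $\mathcal{H}^{s}(A)=0$.

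In the main case one has $\psi(n)\le t^{-n}$ for all $n\ge N_{0}$. Fix such an $n$ and take a cover $\{U_{n,i}\}_{i=1}^{N_{t^{-n}}(A)}$ of $A$ with $\diam U_{n,i}\le t^{-n}$. A ball $B(p/t^{n},\psi(n))$ meets $U_{n,i}$ only if $p/t^{n}$ lies in the $\psi(n)$-neighbourhood of $U_{n,i}$, an interval of length at most $t^{-n}+2\psi(n)\le 3t^{-n}$; since the rationals $p/t^{n}$ are spaced $t^{-n}$ apart, this allows at most $4$ values of $p$. Hence at level $n$ at most $4\,N_{t^{-n}}(A)$ of the balls meet $A$. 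Given $\delta>0$, choose $N\ge N_{0}$ with $2\psi(n)<\delta$ for every $n\ge N$; then
$$\mathcal{H}^{s}_{\delta}\bigl(W_{t}(\psi)\cap A\bigr) \ \le \ \sum_{n=N}^{\infty}4\,N_{t^{-n}}(A)\bigl(2\psi(n)\bigr)^{s} \ = \ 4\cdot 2^{s}\sum_{n=N}^{\infty}\psi(n)^{s}N_{t^{-n}}(A),$$
which tends to $0$ as $N\to\infty$ by the convergence hypothesis. Letting $\delta\to 0$ completes the proof.

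The only genuine obstacle is the case analysis: without separating off the ``large $\psi(n)$'' regime, the key ``$O(1)$ balls per covering set'' estimate fails, because a single ball of radius $\psi(n)\gg t^{-n}$ could meet arbitrarily many covering sets of $A$. Beyond this point the argument is a routine metric-measure calculation that uses only the covering numbers of $A$ and no finer geometric information.
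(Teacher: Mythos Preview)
Your proof is correct and follows essentially the same strategy as the paper: a Hausdorff--Cantelli covering argument based on bounding, at each level $n$, the number of balls $B(p/t^{n},\psi(n))$ meeting $A$ by a constant multiple of $N_{t^{-n}}(A)$, together with a case split on the size of $\psi(n)$ relative to $t^{-n}$. The only minor difference is in the handling of the ``large $\psi$'' case: the paper replaces $\psi$ by $\psi^{*}(n)=\min(\psi(n),\tfrac12 t^{-n})$ and reapplies Case~1, whereas you argue directly that $t^{-ns}N_{t^{-n}}(A)\to 0$ along the relevant subsequence and read off $\mathcal{H}^{s}(A)=0$ from the covers witnessing $N_{t^{-n}}(A)$ --- a slightly more direct route to the same conclusion.
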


\bigskip

\begin{remark}
 Observe that if $\psi(n)>t^{-n}/2$ for infinitely many $n\in\mathbb{N}$, then trivially $W_{t}(\psi)=[0,1]$. Consequently, for any $s > 0$, we have that  $\mathcal{H}^{s}(  W_{t}(\psi)\cap A) =  \mathcal{H}^{s}(A)   $.   However, the convergent sum condition imposes  a restriction on $s$, which in turn implies that $\mathcal{H}^{s}(A) = 0$.  
\end{remark}


The following  upper bound  statement for the Hausdorff dimension of $ W_{t}(\psi)\cap A  $ can be deduced from Theorem~\ref{11} and well know results concerning the packing dimension $\dim_{\rm P}X$ of a subset $X$ of $\mathbb{R}$ -- for details of the latter  see  \S\ref{22}. 

\begin{corollary}\label{17}
Let $A$ be a non-empty subset of $[0,1]$ and $\psi:\mathbb{N}\to(0,\infty)$ be a function. Then
\begin{equation*}
\dim_{\rm H}(W_{t}(\psi)\cap A)\le\dim_{\rm H}W_{t}(\psi)\dim_{\rm P}A.
\end{equation*}
\end{corollary}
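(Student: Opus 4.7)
\textbf{Proof plan for Corollary \ref{17}.} The plan is to bootstrap from the zero-measure criterion in Theorem~\ref{11} to a dimension bound by working with a suitable countable decomposition of $A$ furnished by the packing dimension. Write $\lambda := \lambda_\psi$ and $d_P := \dim_{\rm P} A$. If $\lambda \le 1$, then $\dim_{\rm H} W_t(\psi)=1$ by \eqref{dim-order}, and the claim reduces to $\dim_{\rm H}(W_t(\psi)\cap A) \le \dim_{\rm P} A$, which follows from the elementary inequality $\dim_{\rm H} X \le \dim_{\rm P} X$. So the interesting case is $\lambda > 1$, where we need
$$\dim_{\rm H}(W_t(\psi)\cap A) \ \le\ \frac{d_P}{\lambda}\,.$$

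First, I would invoke the standard ``modified upper box dimension'' characterization of packing dimension (see \S\ref{22}): for every $\varepsilon>0$ there is a countable cover $A \subseteq \bigcup_{i=1}^{\infty} E_i$ with $\overline{\dim}_{\rm B} E_i \le d_P + \varepsilon$ for each $i$. By the countable stability of Hausdorff dimension,
$$\dim_{\rm H}(W_t(\psi)\cap A) \ =\ \sup_{i}\dim_{\rm H}(W_t(\psi)\cap E_i)\,,$$
so it suffices to bound $\dim_{\rm H}(W_t(\psi)\cap E_i)$ for each fixed $i$.

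Next, for a fixed $i$, the definition of upper box dimension gives $N_{t^{-n}}(E_i) \le t^{n(d_P+2\varepsilon)}$ for all sufficiently large $n$. On the other hand, the definition of $\lambda$ as a liminf yields $\psi(n) \le t^{-n(\lambda-\varepsilon)}$ for all sufficiently large $n$. Combining these two estimates, for any
$$s \ >\ \frac{d_P+2\varepsilon}{\lambda-\varepsilon}\,,$$
the tail of the series $\sum_{n}\psi(n)^{s}N_{t^{-n}}(E_i)$ is dominated by a convergent geometric series, hence the series converges. By Theorem~\ref{11}, $\mathcal{H}^{s}(W_t(\psi)\cap E_i)=0$, and therefore $\dim_{\rm H}(W_t(\psi)\cap E_i) \le s$. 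Taking $s \downarrow (d_P+2\varepsilon)/(\lambda-\varepsilon)$, taking the supremum over $i$, and finally letting $\varepsilon \downarrow 0$ gives
$$\dim_{\rm H}(W_t(\psi)\cap A) \ \le\ \frac{d_P}{\lambda} \ =\ \dim_{\rm H}W_t(\psi)\cdot\dim_{\rm P}A\,,$$
as required.

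The argument is essentially routine once Theorem~\ref{11} is in hand; the only mild subtlety is the bookkeeping around the case $\lambda \le 1$ (where the formula $\dim_{\rm H}W_t(\psi) = 1/\lambda$ fails and must be replaced by the trivial upper bound) and the need to pass through packing dimension rather than upper box dimension so that the countable decomposition of $A$ is available. No genuine obstacle arises: the proof is a clean combination of Theorem~\ref{11}, the covering definition of $\dim_{\rm P}$, and countable stability of $\dim_{\rm H}$.
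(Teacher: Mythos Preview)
Your proposal is correct and follows essentially the same route as the paper: invoke the characterisation $\dim_{\rm P}A=\inf\{\sup_i\overline{\dim}_{\rm B}F_i\}$ over countable covers (Proposition~\ref{23}), use countable stability of $\dim_{\rm H}$, and feed the resulting box-counting bound into the convergence criterion of Theorem~\ref{11}. Two small remarks: your displayed equality $\dim_{\rm H}(W_t(\psi)\cap A)=\sup_i\dim_{\rm H}(W_t(\psi)\cap E_i)$ should be an inequality ``$\le$'' (the $E_i$ merely cover $A$), but only that direction is used; and your explicit treatment of the case $\lambda_\psi\le 1$ is in fact slightly more careful than the paper's own proof, which silently identifies $\dim_{\rm H}W_t(\psi)$ with $1/\lambda_\psi$ in the final line.
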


A clear cut  result of  Falconer \cite[Theorem 4]{F1989}  states that if $A$ is a self-similar set,  then
$\dim_{\rm P}A=\dim_{\rm H}A $. 
Combining this  with Corollary~\ref{17} implies  that the Hausdorff dimension of the intersection of $ W_{t}(\psi)$ with an arbitrary self similar set is bounded above by the product of their individual dimensions.  We state this formally as a corollary.

\begin{corollary}\label{110}
Let $A$ be a self-similar set and $\psi:\mathbb{N}\to(0,\infty)$ be a function. Then
\begin{equation*}
\dim_{\rm H}(W_{t}(\psi)\cap A)\le\dim_{\rm H}W_{t}(\psi)\dim_{\rm H}A.
\end{equation*}
\end{corollary}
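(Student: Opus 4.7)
The plan is to obtain Corollary~\ref{110} as a direct two–step consequence of results already established or quoted in the excerpt, so there is no genuine obstacle beyond invoking them correctly. First I would apply Corollary~\ref{17} to the self–similar set $A$, giving the upper bound
\[
\dim_{\rm H}(W_{t}(\psi)\cap A)\ \le\ \dim_{\rm H}W_{t}(\psi)\,\dim_{\rm P}A.
\]
This already does the main dimensional work, since Corollary~\ref{17} itself was derived from the measure–covering estimate in Theorem~\ref{11} by choosing $s$ just above $\dim_{\rm H}W_{t}(\psi)\cdot\overline{\dim}_{\rm B}A$ and using the standard relation between upper box dimension and packing dimension (namely $\dim_{\rm P}X=\inf\{\sup_{i}\overline{\dim}_{\rm B}X_{i}:X\subseteq\bigcup_{i}X_{i}\}$) together with the countable stability of Hausdorff dimension; I would briefly note that this passage from box to packing dimension is the only non–trivial ingredient, but it is entirely standard and was already used in deducing Corollary~\ref{17}.

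The second step is to replace the packing dimension of $A$ by its Hausdorff dimension. For this I would quote Falconer's theorem from \cite{F1989}, which asserts that every self–similar subset $A$ of $\mathbb{R}$ (here, no separation condition is needed) satisfies $\dim_{\rm P}A=\dim_{\rm H}A$. Substituting this equality into the previous inequality immediately yields
\[
\dim_{\rm H}(W_{t}(\psi)\cap A)\ \le\ \dim_{\rm H}W_{t}(\psi)\,\dim_{\rm H}A,
\]
which is the desired conclusion.

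In summary, the proof reduces to (i)~Corollary~\ref{17} applied to an arbitrary subset of $[0,1]$, and (ii)~the coincidence $\dim_{\rm P}=\dim_{\rm H}$ for self–similar sets. There is no intrinsic difficulty to flag: the whole content of the corollary has been pushed into Theorem~\ref{11}, whose proof relies on an efficient covering of $W_{t}(\psi)\cap A$ by $\delta$–neighbourhoods of rationals $p/t^{n}$ intersected with optimal coverings of $A$ at scale $t^{-n}$. The only care needed is to ensure that the constant implicit in passing from $N_{t^{-n}}(A)$ to a packing–type cover is absorbed correctly, but this is handled exactly as in the proof of Corollary~\ref{17} and requires no new ideas here.
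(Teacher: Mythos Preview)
Your proof is correct and follows exactly the paper's own argument: apply Corollary~\ref{17} to obtain $\dim_{\rm H}(W_{t}(\psi)\cap A)\le\dim_{\rm H}W_{t}(\psi)\,\dim_{\rm P}A$, and then invoke Falconer's result \cite[Theorem~4]{F1989} that $\dim_{\rm P}A=\dim_{\rm H}A$ for self-similar sets.
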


\medskip 

\noindent Missing digits sets $C(b,D)$ are well known examples of self-similar sets, see for instance \cite[Chapter 9]{F2003} and references therein.  Thus, the above corollary implies inequality  \eqref{genupper}  irrespective  of the values of $b$ and  $t$ and the composition of the digit set~$D$.

\subsubsection{An example: exposing condition \eqref{restriction}}   \label{exposecond}
We bring the section to  a close  with a concrete example that swifty pinpoints   our  reason  for condition \eqref{restriction} in the second part of Theorem~\ref{12sv}.  With this in mind, let $b=6$ and $t=12$.  Then by definition, it follows that $\alpha_1 = 1$ and  $\alpha_2 = 2$,  $D_1= \{3\}$   and  $D_2= \{2\}$,  and thus \eqref{restriction} holds   for any digit set  
$$D \subseteq \{0,1,4,5\}   \, .  $$
For the moment suppose $D \subseteq \{0,1,2,3,4,5\} $ is any set with four elements. Then, by  \eqref{dim-cantorSV} we have that $$ \textstyle{ \dim_{\rm H} C(6,D) =  \gamma=\frac{\log 4}{\log 6}  \, . }$$
Next observe that 
$$ \textstyle{
W_{12}(\psi) = \limsup\limits_{n\to \infty} A_n(\psi)   \quad {\rm where }   \quad  A_n(\psi):=
\bigcup\limits_{0\le p\le 12^{n}}  B\big(\frac{p}{12^{n}},\psi(n)\big)  \, \cap \, [0,1]} $$
Thus,  it follows  that determining an upper bound for the Hausdorff measure (or dimension)  of $W_{12}(\psi) \cap C(6,D)   $ boils down obtaining an upper bound for the cardinality of 
$$
\Gamma_n (\psi) := \left\{0\le p\le 12^{n}:  \textstyle{B\left(\frac{p}{12^{n}},\psi(n)\right)}\cap C(b,D)\neq\emptyset\right\}   \, . $$
Now, the condition that $\psi(n)\le b^{-\lceil\alpha_{2}n\rceil-1} = 6^{-2n-1}$  for $n$ sufficiently large,  allows us to conclude that
\begin{equation*}
\begin{aligned}
\Gamma_n (\psi)  & \ \subseteq \ \left\{0\le p\le 12^{n}:\frac{p}{12^{n}}=\frac{\xi_{1}}{6}+ \frac{\xi_{2}}{6^{2}}+\cdots+\frac{\xi_{2n}}{6^{2n}}\ \  {\rm with \ \  } \xi_{i} \in   D \right\} \\[2ex]
& ~\hspace{6ex} \cup \ \ \left\{0\le p\le 12^{n}:\frac{p}{12^{n}}=\frac{\xi_{1}}{6}+ \frac{\xi_{2}}{6^{2}}+\cdots+\frac{\xi_{2n}}{6^{2n}}+\frac{1}{6^{2n}}\ \  {\rm with \ \  } \xi_{i} \in   D \right\}.
\end{aligned}
\end{equation*}
In other words,  the balls centred at $12$-adic rationals  $p/12^n$ of radius $\psi(n)$ that intersect  $C(6,D)$ can only be those whose centres coincide with the endpoints of the basic intervals at the $2n$-th level of the Cantor-type construction  of $C(6,D)$.  Indeed, it turns out that if $D$ contains both $0$ and $b-1$ we actually have equality rather than just containment in the above.   In any case, the upshot is that   calculating   $\# \Gamma_n(\psi)$  boils down to calculating the cardinality of   
\begin{equation*}
\begin{aligned}
\Upsilon_n  & \ :=   \ \big\{(\xi_{n+1},\xi_{n+2},\ldots,\xi_{2n})  :3^{n}\mid6^{n-1}\xi_{n+1}+6^{n-2}\xi_{n+2}+\ldots+\xi_{2n} \ \  {\rm with \ \  } \xi_{i} \in   D    \big\} \\[2ex]
& ~\hspace{6ex} \cup \ \ \big\{ (\xi_{n+1},\xi_{n+2},\ldots,\xi_{2n}) :3^{n}\mid6^{n-1}\xi_{n+1}+6^{n-2}\xi_{n+2}+\ldots+\xi_{2n}+1 \ \  {\rm with \ \  } \xi_{i} \in   D    \big\}.
\end{aligned}
\end{equation*}
Since, with a little thought, it is not difficult to see that  $\# \Gamma_n (\psi) \,  \leq  \,  \# \Upsilon_n \, \times \, 4^n  $ .
In short, if $D$ satisfies condition \eqref{restriction}, so in this particular example   $D= \{0,1,4,5\}   $, we are able to show that 
$ \Upsilon_n =  \{(0,0,\ldots,0)\}   \cup  \{(5,5,\ldots,5)\}.$ Thus, 
$ \# \Gamma_n (\psi)=  2\times4^n   =      2\times6^{\gamma n}  $
and we are in great  shape -- indeed  it explains the presence of the ``$6^{\gamma n}$'' factor in the convergent sum condition in Theorem $\ref{12sv}$ that ensures that $\mathcal{H}^{s}(W_{12}(\psi)\cap C(6,D))=  0 $.    However, if $D$ does not satisfy \eqref{restriction} and  contains at least one of the digits $0$ and $b-1$ (so we are not in the trivial case covered by part (i) of Theorem~\ref{12sv}),   we are  unable to even show that  
$$\lim_{n\to\infty}\frac{\log\#  \Upsilon_n}{n}=0   \, ,$$
let alone that $\#  \Upsilon_n   \ll 1$.\footnote{{\small 
Throughout, given  functions $f$ and $g$  defined on a set $S$, we
write $f\ll g$ if there exists a constant $\kappa=\kappa(f,g,S)>0$, such that $|f(x)|\leq\kappa |g(x)|$ for all $x\in S$, and we write $f\asymp g$ if $f\ll g\ll f$.}} 
The former would suffice to yield the correct upper bound for the dimension of $W_{12}(\psi)\cap C(6,D)$.

\bigskip 

\begin{remark}  \label{randomdimrem}
With \eqref{randomdim} in mind, we use the above  example to show that for particular $\psi$ the dimension of the intersection $W_{12}(\psi)\cap C(6,D)$  behaves as though the sets in question are “independent” or, equivalently “random”.  By  Corollary $\ref{16}$, for  $D= \{0,1,4,5\}   $ and  any 
$\psi:\mathbb{N}\to(0,\infty)$ with $\psi(n)\le 6^{-2n-1}$  we have that 
\begin{equation*}
\begin{aligned}
&\quad\dim_{\rm H}(W_{12}(\psi)\cap C(6,D))=\frac{\log6}{\log12}\dim_{\rm H}W_{12}(\psi)\dim_{\rm H} C(6,D)=\frac{\log4}{\log12}\dim_{\rm H}W_{12}(\psi) .
\end{aligned}
\end{equation*}
Thus,  in view of \eqref{dim-order}, it follows that if
$$
\lambda_{\psi} = \frac{\log3\log6}{\log\frac{3}{2}\log12},
$$
then 
$$\dim_{\rm H}(W_{12}(\psi)\cap C(6,D))=\dim_{\rm H}W_{12}(\psi)+\dim_{\rm H}C(6,D)-1.$$

\noindent  To be absolutely explicit,  in the above we could  take $$ \psi: n \to  \psi(n):=6^{-\alpha n} \quad  {\rm  with}     \quad \alpha = \log3 /\log(3/2) = 2.7095\cdots\, . $$    We reiterate the fact  that when  $b$ and $t$ are multiplicatively dependent, and $D$ contains at least one of the digits $0$ and $b-1$,   the dimension of the intersection  satisfies the product formula   \eqref{dimstate}.  Thus,  it is impossible for
$\dim ( W_{t}(\psi)\cap C(b,D))   $ to satisfy \eqref{randomdim} for any choice of  $\psi$ when both $W_{t}(\psi)$ and $C(b,D)$ have dimensions strictly less than one; that is, the interesting situation.
\end{remark}

\bigskip

\section{Preliminaries: fractal measures and dimensions}\label{22}

In this section, for completeness and to establish notation, we briefly review standard concepts from fractal geometry that are used throughout the paper. We also describe an elegant result that relates the packing dimension to the box dimension. For further details, we refer the reader to \cite{F2003}. Throughout, given a non-empty bounded subset $U$ of $d$-dimensional Euclidean spake $\mathbb{R}^{d}$,  we let 
$\diam U$  denote the diameter of $U$ with respect to the Euclidean metric. Furthermore throughout, let $F$ be a subset of $\mathbb{R}^{d}$.

\begin{itemize}
    \item \emph{Hausdorff measure and dimension.\ }
For $\delta>0$, a countable (or finite) collection $\{U_{i}\}$ of sets in $\mathbb{R}^{d}$ of diameter at most $\delta$ that cover $F$ is called a $\delta$-cover of $F$. For  $s\geq0$, let
$$\mathcal{H}^{s}_{\delta}(F):=\inf\left\{\sum_{i}(\diam U_{i})^{s}:\{U_{i}\}\ {\rm is\ a}\  \delta {\rm -cover\ of}\ F\right\}.$$
The $s$-dimensional Hausdorff measure of $F$ is defined by 
\begin{equation*}
\mathcal{H}^{s}(F):=\lim_{\delta\to0}\mathcal{H}^{s}_{\delta}(F)  \, . 
\end{equation*}
In turn,  the  Hausdorff dimension of  $F$ is defined by 
\begin{equation*}
\dim_{\rm H}F:=\inf\{s:\mathcal{H}^{s}(F)=0\}=\sup\{s:\mathcal{H}^{s}(F)=\infty\}.
\end{equation*}
\item \emph{Box-counting dimensions. \ }   Suppose $F$ is bounded. 
For $\delta>0$, let $N_{\delta}(F)$ be the smallest number of sets of diameter at most $\delta$ which can cover $F$. The lower and upper box-counting dimensions of $F$ respectively are defined as
$$\underline{\dim}_{\rm B}F:=\liminf_{\delta\to0}\frac{\log N_{\delta}(F)}{-\log\delta}\quad {\rm and}\quad \overline{\dim}_{\rm B}F:=\limsup_{\delta\to0}\frac{\log N_{\delta}(F)}{-\log\delta}.$$
If these are equal we refer to the common value as the box-counting dimension or simply the  box dimension of $F$:
$$\dim_{\rm B}F:=\lim_{\delta\to0}\frac{\log N_{\delta}(F)}{-\log\delta}.$$

\item \emph{Packing measure and dimension.\ } 
Suppose $F$ is non-empty. Let $\delta>0$, a collection $\{B_{i}\}$ of disjoint balls of radius at most $\delta$ with centers in $F$ is called a $\delta$-packing of $F$. For $s\geq0$, let
$$\mathcal{P}_{\delta}^{s}(F):=\sup\left\{\sum_{i}(\diam B_{i})^{s}: \{B_{i}\}\ {\rm is\ a}\ \delta {\rm -packing}\  {\rm  of}\ F\right\}$$
and
$$\mathcal{P}_{0}^{s}(F):=\lim_{\delta\to0}\mathcal{P}_{\delta}^{s}(F).$$
The  \textit{$s$-dimensional packing measure} of $F$ is defined by 
$$\mathcal{P}^{s}(F):=\inf\left\{\sum_{i}\mathcal{P}_{0}^{s}(F_{i}):F\subseteq\bigcup_{i}F_{i}\right\},$$
where the infimum is taken over all possible countable covers $\{F_i\}$ of $F$. In turn,  the \textit{packing dimension} of $F$ is defined by
$$\dim_{\rm P}F:=\inf\{s:\mathcal{P}^{s}(F)=0\}=\sup\{s:\mathcal{P}^{s}(F)=\infty\}.$$
\end{itemize}

\noindent The following well known  proposition (see for instance  \cite[Proposition 3.8]{F2003}) brings to the forefront  between the upper box-counting dimension and the packing dimension.  As we shall soon see, it  plays a crucial role in the proof of Corollary~\ref{17}.
\begin{proposition}\label{23}
If $F$ is a non-empty subset of $\mathbb{R}^{d}$, then
$$\dim_{\rm P}F=\inf\left\{\sup_{i}\overline{\dim}_{\rm B}F_{i}:F\subseteq\bigcup_{i}F_{i}\right\},$$
where the  infimum   is taken over all possible countable covers $\{F_i\}$ of $F$.
\end{proposition}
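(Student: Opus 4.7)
The plan is to reduce the identity to the following lemma: for a bounded set $F$, the quantity $\inf\{s : \mathcal{P}_0^s(F) = 0\}$, which one may call the \emph{pre-packing dimension}, coincides with the upper box dimension $\overline{\dim}_{\rm B} F$. Once this lemma is established, the equality for $\dim_{\rm P} F$ follows directly from the very definition of the packing measure as $\mathcal{P}^s(F) = \inf\{\sum_i \mathcal{P}_0^s(F_i) : F \subseteq \bigcup_i F_i\}$, where the infimum is taken over countable covers.

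First I would prove the lemma. For the inequality $\inf\{s : \mathcal{P}_0^s(F) = 0\} \leq \overline{\dim}_{\rm B} F$, pick $s > \overline{\dim}_{\rm B} F$ and choose $s' \in (\overline{\dim}_{\rm B} F, s)$ so that $N_r(F)\, r^{s'} \leq C$ for all small $r$. For any $\delta$-packing $\{B(x_i, r_i)\}$ of $F$ (disjoint balls with $r_i \leq \delta$ and $x_i \in F$), split the balls into dyadic ranges $r_i \in (2^{-k-1}\delta, 2^{-k}\delta]$; balls within a given range have centers separated by at least $2^{-k}\delta$, so each range contains at most a dimensional constant times $N_{2^{-k-2}\delta}(F)$ balls. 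Summing the resulting geometric series yields $\mathcal{P}_\delta^s(F) \ll \delta^{s-s'} \to 0$, hence $\mathcal{P}_0^s(F) = 0$. For the reverse inequality, if $s < \overline{\dim}_{\rm B} F$ then a greedy maximal $\delta$-separated subset of $F$ produces a $\delta$-packing of size at least $N_{2\delta}(F)$, yielding $\mathcal{P}_\delta^s(F) \geq N_{2\delta}(F)\, \delta^s$; passing to $\delta \to 0$ along a suitable subsequence pushes $\mathcal{P}_0^s(F)$ to infinity.

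With the lemma in hand, the $(\leq)$ direction of Proposition~\ref{23} follows by taking any countable cover $\{F_i\}$ of $F$ by bounded sets and any real $s > \sup_i \overline{\dim}_{\rm B} F_i$: the lemma gives $\mathcal{P}_0^s(F_i) = 0$ for every $i$, and the definition of $\mathcal{P}^s$ via covers yields $\mathcal{P}^s(F) \leq \sum_i \mathcal{P}_0^s(F_i) = 0$; hence $\dim_{\rm P} F \leq s$, and one passes to the infimum. For the $(\geq)$ direction, if $s > \dim_{\rm P} F$ then $\mathcal{P}^s(F) = 0$, so for each integer $k \geq 1$ one picks a cover $\{F_i^{(k)}\}$ of $F$ with $\sum_i \mathcal{P}_0^s(F_i^{(k)}) < 1/k$; in particular $\mathcal{P}_0^s(F_i^{(k)}) < \infty$ for every $i$, which through the lemma gives $\overline{\dim}_{\rm B} F_i^{(k)} \leq s$, and so the right-hand infimum is at most $s$.

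The main obstacle is the dyadic grouping step underlying the lemma: one must bound the number of packing balls in each dyadic range using the mutual separation of their centers, and then verify that the resulting geometric sum actually tends to zero (not merely stays finite) as $\delta \to 0$. The rest of the argument is essentially bookkeeping once the lemma is in place.
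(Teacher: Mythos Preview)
The paper does not prove this proposition; it merely cites Falconer's textbook \cite[Proposition~3.8]{F2003} as a reference. Your proposal is essentially the standard argument found there, and it is correct.

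Two minor remarks on the $(\geq)$ direction. First, your lemma is stated for bounded sets, so you should note that $\mathcal{P}_0^s(F_i^{(k)})<\infty$ automatically forces $F_i^{(k)}$ to be bounded: an unbounded set in $\mathbb{R}^d$ contains, for every $\delta>0$, infinitely many $2\delta$-separated points, and placing balls of radius $\delta$ around them gives $\mathcal{P}_\delta^s=\infty$. Second, the indexing over $k$ is unnecessary; a single cover with $\sum_i \mathcal{P}_0^s(F_i)<1$ already yields $\overline{\dim}_{\rm B}F_i\le s$ for every $i$.
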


\bigskip 

We now  present the proof of Corollary $\ref{17}$,  assuming the validity  of  Theorem~\ref{11}. 

\medskip 

\begin{proof}[\rm \textbf{Proof of Corollary~\ref{17} modulo Theorem~\ref{11}}]  In view of Proposition $\ref{23}$, the desired statement  follows on  showing that for every countable cover $\{U_{i}\}_{i=1}^{\infty}$ of $A$, we have that 
$$\dim_{\rm H}(W_{t}(\psi)\cap A)\leq\dim_{\rm H}W_{t}(\psi)\sup_{i\geq1}\overline{\dim}_{\rm B}U_{i}.$$
Since $W_{t}(\psi)\cap A\subseteq\bigcup_{i=1}^{\infty}(W_{t}(\psi)\cap U_{i})$, it follows from the monotonicity and countable stability properties of Hausdorff dimension that
\begin{equation}\label{36}
\dim_{\rm H}(W_{t}(\psi)\cap A)\leq\dim_{\rm H}\left(\bigcup_{i=1}^{\infty}(W_{t}(\psi)\cap U_{i})\right)=\sup_{i\geq1}\dim_{\rm H}(W_{t}(\psi)\cap U_{i}).
\end{equation}
In view of Theorem $\ref{11}$, for any $i\in\mathbb{N}$ and any real number $s>0$,  
\begin{equation}\label{equality6}
\mathcal{H}^{s}(W_{t}(\psi)\cap U_{i})=0  \quad  {\rm if}  \quad  \sum\limits_{n=1}^{\infty}\psi(n)^{s}N_{t^{-n}}(U_{i})<\infty  \,. 
\end{equation} 
Since
$$\psi(n)^{s}N_{t^{-n}}(U_{i})=t^{n\left(\frac{\log\psi(n)}{n\log t}s+\frac{\log N_{t^{-n}}(U_{i})}{n\log t}\right)},$$
it follows that
\begin{equation}\label{inequality1} 
\quad  \sum\limits_{n=1}^{\infty}\psi(n)^{s}N_{t^{-n}}(U_{i})<\infty  \quad  {\rm if}\quad s>  \frac{1}{\lambda_\psi}\limsup_{n\to\infty}\frac{\log N_{t^{-n}}(U_{i})}{n\log t},
\end{equation}
where
$$ \lambda_\psi= \liminf_{n \to \infty}  \frac{- \log \psi(n)}{n \log t }.$$
Thus, on combining \eqref{equality6}, \eqref{inequality1} and  \eqref{dim-order} we find that
$$\dim_{\rm H}(W_{t}(\psi)\cap U_{i}) \,  \leq  \,  \dim_{\rm H}W_{t}(\psi) \ \limsup_{n\to\infty}\frac{\log N_{t^{-n}}(U_{i})}{n\log t}  \, =  \,   \dim_{\rm H}W_{t}(\psi)   \ \overline{\dim}_{\rm B}U_{i}.$$
Therefore,
$$\sup_{i\geq1}\dim_{\rm H}(W_{t}(\psi)\cap U_{i})\leq\dim_{\rm H}W_{t}(\psi)\sup_{i\geq1}\overline{\dim}_{\rm B}U_{i}.$$
This together with $\eqref{36}$ gives
$$\dim_{\rm H}(W_{t}(\psi)\cap A)\leq\dim_{\rm H}W_{t}(\psi)\sup_{i\geq1}\overline{\dim}_{\rm B}U_{i}.$$
This thereby completes the proof of Corollary $\ref{17}$.
\textrm{}
\end{proof}

\section{Proofs of Theorems~\ref{12sv} and \ref{11}}\label{412}
Throughout this section, we suppose that the integers $b \ge 3$ and $t \ge 2$ have the same prime divisors.  Let $A$ be a non-empty subset of $[0,1]$. By the definition of $W_{t}(\psi)$, we have that
$$W_{t}(\psi) = \limsup\limits_{n\to\infty}\bigcup_{0\leq p\leq t^{n}}B\left(\frac{p}{t^{n}},\psi(n)\right)\cap[0,1]=\bigcap_{N=1}^{\infty}\bigcup_{n=N}^{\infty}\bigcup_{0\leq p\leq t^{n}} B\left(\frac{p}{t^{n}},\psi(n)\right)\cap[0,1] \, .$$
It thus follows  that   for any $N \ge 1 $,  the collection of balls 

$$\Big\{B\left(\frac{p}{t^{n}},\psi(n)\right) :n\geq N,\ p\in\Gamma_n (\psi,A)\Big\},$$
where 
$$
\Gamma_n (\psi,A) := \left\{0\le p\le t^{n}:  \textstyle{B\left(\frac{p}{t^{n}},\psi(n)\right)}\cap  A\neq\emptyset\right\}  \, , $$

\noindent is a natural covering of $W_{t}(\psi) \cap A$.   In turn, it follows that determining an upper bound for the Hausdorff measure (or dimension)  of $W_{t}(\psi) \cap A$  boils down obtaining an upper bound for the cardinality of 
$
\Gamma_n (\psi,A) $. 
For easy of notation, if $A=C(b,D)$, we will simply denote $\Gamma_{n}(\psi,A)$ by $\Gamma_{n}(\psi)$; that is 
$$
\Gamma_{n}(\psi)  :=  \Gamma_{n}(\psi,C(b,D))  .
$$
Also, we let
\begin{equation}\label{D_{*}}
D_{*}:=\{0,1,\ldots,b-1\}\setminus(D_{1}\cup D_{2}),
\end{equation}
where the sets $D_{1}$ and $D_{2}$ are defined in $\eqref{D_{1}}$ and $\eqref{D_{2}}$. 
The following proposition provides upper bounds for $\#\Gamma_n(\psi, A)$ and $\#\Gamma_n(\psi)$, which play key roles in the proofs of Theorem \ref{11} and Theorem \ref{12sv}, respectively.

\begin{proposition}\label{ub}
Let $b,t$ be integers with $b\geq3$ and $t\geq2$.
\begin{enumerate}
\item[(i)] If $\psi:\mathbb{N}\to(0,\infty)$ satisfies $\psi(n)\leq\frac{1}{2}t^{-n}$ for all $n\in\mathbb{N}$, then
$$\#\Gamma_n (\psi,A)  \le 2  \,   N_{t^{-n}}(A).    $$ 
\item[(ii)] Suppose $b$ and $t$ are multiplicatively independent and have the same prime divisors 
and that $D\subseteq D_{*}$.
If $\psi:\mathbb{N}\to(0,\infty)$ satisfies $\psi(n)\leq b^{-\lceil\alpha_{2}n\rceil-1}$ for all $n\in\mathbb{N}$, then
\begin{equation} \label{better}
    \#\Gamma_n (\psi)\ll b^{\alpha_{1}\gamma n}  \, 
\end{equation}
where
$$\gamma=\frac{\log\#D}{\log b}.$$
\end{enumerate}
\end{proposition}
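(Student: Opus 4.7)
The plan is to tackle the two parts of Proposition~\ref{ub} separately: a short geometric covering argument for part (i), and a more delicate divisibility analysis for part (ii), where the main obstacle resides.

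For part (i), I would fix a minimal cover $\{U_i\}$ of $A$ consisting of at most $N_{t^{-n}}(A)$ sets of diameter at most $t^{-n}$. For each $U_i$, the grid points $p/t^n$ with $B(p/t^n,\psi(n))\cap U_i\neq\emptyset$ lie in an open interval of length at most $\operatorname{diam}(U_i)+2\psi(n)\leq 2t^{-n}$ (the openness comes from the strict inequality in \eqref{dioset}). Since consecutive grid points are spaced exactly $t^{-n}$ apart, such an open interval contains at most two grid points, so each $U_i$ contributes at most two values of $p$. Summation over $i$ then yields the claimed bound $\#\Gamma_n(\psi,A)\leq 2\,N_{t^{-n}}(A)$.

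For part (ii), I would set $m:=\lceil\alpha_2 n\rceil$ so that $R:=b^m/t^n$ is a positive integer by the common prime divisor hypothesis. The growth condition $\psi(n)\leq b^{-m-1}$ forces every $p\in\Gamma_n(\psi)$ to satisfy $p/t^n=\sum_{i=1}^m \xi_i b^{-i}$ or $\sum_{i=1}^m \xi_i b^{-i}+b^{-m}$ for some $(\xi_i)\in D^m$, because these are the only $b$-adic rationals of denominator $b^m$ lying within $b^{-m-1}$ of $C(b,D)$. Multiplying through by $b^m$ then reduces the problem to bounding the number of pairs $((\xi_i),\epsilon)\in D^m\times\{0,1\}$ for which $R$ divides $Q:=\sum_{i=1}^m \xi_i b^{m-i}+\epsilon$. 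Setting $\ell:=\lceil m-\alpha_1 n\rceil$, I would then exhibit a coprime factorisation $R=R_1 R_2$ where $R_2:=\prod_{j>k_*}q_j^{v_{q_j}(b)(m-\alpha_1 n)}$ collects the ``slowly growing'' prime factors and $R_1$ is the complementary factor; a short valuation check using \eqref{assumption} and \eqref{b_{*}} shows $R\mid b^{\ell}$. This immediately yields two obvious parametric families of solutions: for any ``free prefix'' $(\xi_1,\ldots,\xi_{m-\ell})\in D^{m-\ell}$, one may either append $\xi_{m-\ell+1}=\cdots=\xi_m=0$ with $\epsilon=0$ (Case A), or $\xi_{m-\ell+1}=\cdots=\xi_m=b-1$ with $\epsilon=1$ (Case B). In both cases, a direct computation shows $Q$ is a multiple of $b^\ell$, and hence of $R$.

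The main obstacle is to show that Cases A and B exhaust all solutions, and this is precisely where the restriction $D\cap(D_1\cup D_2)=\emptyset$ becomes decisive. I would lift the congruence $R_2\mid Q$ successively modulo $b_*^k$ for $k=1,2,\ldots,\ell$: at level $k$, the new unknown digit $\xi_{m-k+1}$ contributes a term of the form $\tilde b^{k-1} b_*^{k-1}\xi_{m-k+1}$ with $\tilde b:=b/b_*$ coprime to $b_*$. The resulting congruence forces $\xi_{m-k+1}\equiv 0\pmod{b_*}$ in Case A, and $\xi_{m-k+1}\equiv -1\pmod{b_*}$ in Case B. Since $D\cap(D_1\cup D_2)=\emptyset$ by \eqref{D_{1}} and \eqref{D_{2}}, these residue conditions admit respectively only the digits $0$ and $b-1$ from $D$, pinning $\xi_{m-k+1}$ down uniquely at each step -- exactly mirroring the calculation in the worked example of \S\ref{exposecond}. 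Verifying that the complementary divisibility $R_1\mid Q$ holds automatically in both cases (which again follows from $R\mid b^\ell$) then completes the exhaustion, giving at most $2(\#D)^{m-\ell}\leq 2(\#D)^{\alpha_1 n}=2 b^{\alpha_1\gamma n}$ solutions overall, and hence the desired bound $\#\Gamma_n(\psi)\ll b^{\alpha_1\gamma n}$.
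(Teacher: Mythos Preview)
Your argument for part~(i) is correct and matches the paper's.

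For part~(ii), the strategy is right and your lifting argument is exactly the inductive mechanism the paper isolates as Lemma~\ref{32}. There is one imprecision: you claim the lift runs for $k=1,\ldots,\ell$ with $\ell=\lceil m-\alpha_1 n\rceil$, but $b_*^k\mid R$ is equivalent to $v_{q_j}(b)\,k\le v_{q_j}(b)(m-\alpha_1 n)$ for each $j>k_*$, i.e.\ to $k\le m-\alpha_1 n$. Hence when $\alpha_1 n\notin\mathbb{Z}$ you can only reach $k=\lfloor m-\alpha_1 n\rfloor=\ell-1$, not $k=\ell$. This is harmless for the conclusion: you then have $\lceil\alpha_1 n\rceil$ free digits rather than $\lfloor\alpha_1 n\rfloor$, and $(\#D)^{\lceil\alpha_1 n\rceil}\le(\#D)\cdot b^{\alpha_1\gamma n}$ still gives \eqref{better}. (Note also that the claims ``$R\mid b^{\ell}$'' and ``$R_1\mid Q$ holds automatically'' concern only the exhibition of Cases~A and~B as \emph{solutions}; for the upper bound all you use is $b_*^{\lfloor m-\alpha_1 n\rfloor}\mid R\mid Q$, so those remarks can be dropped.)

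With that fix, your route is more direct than the paper's. The paper writes $\alpha_1=l_1/l_0$ in lowest terms, decomposes $n=l_0\tilde n+r$, and works at level $m_0=\lceil\alpha_2 l_0\tilde n\rceil+\lceil\alpha_2 r\rceil$ rather than $\lceil\alpha_2 n\rceil$, specifically so that the exponent $\lceil\alpha_2 l_0\tilde n\rceil-\alpha_1 l_0\tilde n$ in the key divisibility $b_*^{\,\cdot}\mid b^{m_0}/t^n$ is a genuine nonnegative integer before Lemma~\ref{32} is invoked (this is the content of Lemma~\ref{important}). Your floor trick sidesteps this decomposition entirely and shows that the worry expressed just before Lemma~\ref{important}---that one ``cannot apply Lemma~\ref{32} if $\alpha_1 n\notin\mathbb N$''---is overly cautious: one simply applies it with exponent $\lfloor m-\alpha_1 n\rfloor$ and absorbs the resulting off-by-one into the implied constant.
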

\begin{remark}
It is easily verified that  
$$N_{t^{-n}}(C(b,D))\asymp t^{\gamma n}   \, . $$ Thus,   part (i) of Proposition~\ref{ub} implies that $\#\Gamma_n (\psi)\ll t^{\gamma n}$. On the other hand,     the upper bound  \eqref{better} in  part (ii) provides a sharper estimate,  since by definition $b^{\alpha_{1}}<t$.  
However, this improvement comes at the cost of imposing several additional assumptions.

\end{remark}


\subsection{Proof of Proposition \ref{ub}~(i)}
 By the definition of $N_{t^{-n}}(A)$, there exists a $t^{-n}$~-cover of $A$ with cardinality $N_{t^{-n}}(A)$. That is, there exists non-empty sets
$U_{1},\ldots,U_{N_{t^{-n}}(A)}$ with $\diam U_{i}\leq t^{-n}$ for each $1\leq i\leq N_{t^{-n}}(A)$, such that
$$A\subseteq\bigcup_{i=1}^{N_{t^{-n}}(A)}U_{i}.$$
Hence,
\begin{equation*}
\begin{aligned}
\Gamma_n(\psi,A) &\subseteq\left\{0\leq p\leq t^{n}: B\left(\frac{p}{t^{n}},\psi(n)\right)\cap\bigcup_{i=1}^{N_{t^{-n}}(A)}U_{i}\neq\emptyset\right\} \\[2ex]
&=\bigcup_{i=1}^{N_{t^{-n}}(A)}\left\{0\leq p\leq t^{n}: B\left(\frac{p}{t^{n}},\psi(n)\right)\cap U_{i}\neq\emptyset\right\} \\[2ex]
&=\bigcup_{i=1}^{N_{t^{-n}}(A)}\Gamma_n(\psi,U_{i}).
\end{aligned}
\end{equation*}
It follows that
\begin{equation}\label{34}
\#\Gamma_n(\psi,A) \leq\sum_{i=1}^{N_{t^{-n}}(A)}\#\Gamma_n(\psi,U_{i}).
\end{equation}
Now since $\psi(n)\leq\frac{1}{2}t^{-n}$, the balls $B\left(\frac{p}{t^{n}},\psi(n)\right)$  with $0\leq p\leq t^{n}$ are pairwise disjoint (the distance between their centers are at least $t^{-n}$) and so together with the fact that $\diam U_{i}\leq t^{-n}$, it follows that   $\#\Gamma_n(\psi,U_{i})\leq2$.   
Hence, we obtain via \eqref{34}   the desired statement:
$$\#\Gamma_n(\psi,A) \leq2N_{t^{-n}}(A). $$
\hfill $\Box$

\medskip 

As we shall see in 
\S\ref{pot}, the first part of the proposition (namely, the easy part) is essentially all that is required to establish Theorem \ref{11}.

\subsection{Proof of Proposition $\ref{ub}$ (ii)}

For establishing part (ii) of Proposition $\ref{ub}$, we will make use of  the following lemmas. The first lemma is pretty obvious  but nevertheless useful. 
\begin{lemma}\label{divisible 5}  
Suppose $b$ and $t$ have the same prime divisors.  Then,  
for any $n\in\mathbb{N}$,
$$t^{n}\mid b^{\lceil \alpha_{2}n\rceil}.$$
\end{lemma}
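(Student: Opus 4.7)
The plan is to unfold the definitions and reduce the divisibility statement to a pair of inequalities between exponents of common prime divisors. Since $b$ and $t$ have the same prime divisors $q_1,\dots,q_K$, I will begin by writing
\[
t^{n}=\prod_{j=1}^{K} q_{j}^{\,n\,v_{q_{j}}(t)}\qquad\text{and}\qquad b^{\lceil\alpha_{2}n\rceil}=\prod_{j=1}^{K} q_{j}^{\,\lceil\alpha_{2}n\rceil\,v_{q_{j}}(b)}.
\]
By unique factorization, $t^{n}\mid b^{\lceil\alpha_{2}n\rceil}$ is then equivalent to showing that
\[
n\,v_{q_{j}}(t)\ \le\ \lceil\alpha_{2}n\rceil\,v_{q_{j}}(b)\qquad\text{for every }1\le j\le K,
\]
or, after dividing through by $n\,v_{q_{j}}(b)$, to
\[
\frac{v_{q_{j}}(t)}{v_{q_{j}}(b)}\ \le\ \frac{\lceil\alpha_{2}n\rceil}{n}.
\]

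The remaining step is to verify these inequalities using the definition \eqref{18} of $\alpha_{2}$. By definition, $\alpha_{2}\ge \frac{v_{q_{j}}(t)}{v_{q_{j}}(b)}$ for each $j$, while $\lceil\alpha_{2}n\rceil\ge\alpha_{2}n$ gives $\frac{\lceil\alpha_{2}n\rceil}{n}\ge\alpha_{2}$. Chaining these two bounds yields the required inequality for every index $j$, and the lemma follows.

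There is really no obstacle here: the only subtlety is keeping track of which side of the inequality is bounded by $\alpha_{2}$ and ensuring the rounding with the ceiling function goes the right way. The assumption that $b$ and $t$ share their prime divisors is precisely what guarantees that no prime appears in $t^{n}$ outside the set $\{q_{1},\dots,q_{K}\}$, so the exponent comparison exhausts all primes dividing $t^{n}$.
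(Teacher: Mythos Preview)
Your proof is correct and follows essentially the same approach as the paper: factor $t^{n}$ and $b^{\lceil\alpha_{2}n\rceil}$ over the common primes $q_{1},\dots,q_{K}$, then compare exponents using $v_{q_{j}}(t)\le\alpha_{2}v_{q_{j}}(b)$ and $\alpha_{2}n\le\lceil\alpha_{2}n\rceil$. The paper chains the inequalities directly as $n\,v_{q_{j}}(t)\le n\alpha_{2}v_{q_{j}}(b)\le\lceil\alpha_{2}n\rceil v_{q_{j}}(b)$ rather than dividing through by $n\,v_{q_{j}}(b)$, but this is a cosmetic difference.
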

\begin{proof}[\rm \textbf{Proof}]
Recall that
$$b=q_{1}^{v_{q_{1}}(b)}q_{2}^{v_{q_{2}}(b)}\ldots q_{K}^{v_{q_{K}}(b)}  \quad 
{\rm and  }  \quad  t=q_{1}^{v_{q_{1}}(t)}q_{2}^{v_{q_{2}}(t)}\ldots q_{K}^{v_{q_{K}}(t)}.$$
Thus,
\begin{equation}\label{equality2}
b^{\lceil \alpha_{2}n\rceil}=q_{1}^{\lceil \alpha_{2}n\rceil v_{q_{1}}(b)}q_{2}^{\lceil \alpha_{2}n\rceil v_{q_{2}}(b)}\ldots q_{K}^{\lceil \alpha_{2}n\rceil v_{q_{K}}(b)}
\end{equation}
and
\begin{equation}\label{equality3}
t^{n}=q_{1}^{nv_{q_{1}}(t)}q_{2}^{nv_{q_{2}}(t)}\ldots q_{K}^{nv_{q_{K}}(t)}.
\end{equation}
Next note that for all $1\leq i\leq K$, it follows from the definition of $\alpha_{2}$ that $v_{p_{i}}(t)\leq\alpha_{2}v_{p_{i}}(b)$, and so
\begin{equation*}
\begin{aligned}
nv_{q_{i}}(t)\leq n\alpha_{2}v_{q_{i}}(b)\leq\lceil \alpha_{2}n\rceil v_{q_{i}}(b).
\end{aligned}
\end{equation*}
This together with \eqref{equality2} and \eqref{equality3} implies that  $
t^{n}$ divides $ b^{\lceil \alpha_{2}n\rceil}
  $ as desired. 
\textrm{}
\end{proof}

In order to the state the next lemma, we  need to introduce  some useful notation.  By definition, or equivalently construction,  we have that 
$$C(b,D)
=\bigcap_{m=1}^{\infty}C_{m}(b,D)   \qquad {\rm where }  \qquad C_{m}(b,D) :=   \bigcup_{i=1}^{(\#D)^{m}}I_{m,i} $$
is the union over the $m$-th level basic intervals
$$\Big\{I_{m,i}:= \Big[\textstyle{\frac{q_{i}}{b^{m}},\frac{q_{i}+1}{b^{m}}}\Big]
\ : \ i=1,2,\ldots,(\#D)^{m}\Big\}$$ 
associated with the  Cantor-type construction of $C(b, D)$.  With this in mind, let 
 $$E(m)=E(m,b,D): = \Big\{ \frac{q_{i}}{b^{m}},\frac{q_{i}+1}{b^{m}} 
\ : \ i=1,2,\ldots,(\#D)^{m}\Big\}$$ 
 denote the set of endpoints of the $m$-th level basic intervals  and in turn, for  $n,m\in\mathbb{N}$, let
$$F_{n,m}(\psi):=\left\{0\leq p\leq t^{n}: \exists\ \frac{q}{b^{m}}\in E(m)   \ \ {\rm s.t.}\ \ \left|\frac{p}{t^{n}}-\frac{q}{b^{m}}\right|<\max\Big(\psi(n),\frac{1}{b^{m}}\Big)\right\}.$$
Now, recall that  
$$
\Gamma_n (\psi) = \Gamma_{n}(\psi,C(b,D))= \left\{0\le p\le t^{n}:  \textstyle{B\left(\frac{p}{t^{n}},\psi(n)\right)}\cap  C(b,D)\neq\emptyset\right\}  \,  . $$
The following lemma plays an important role in that it enables us to  characterize $ \Gamma_n (\psi) $
in terms of the endpoints of the basic intervals  associated with $C(b, D)$. The latter are of course easier to describe.

\begin{lemma}\label{30} 
Let $b,t$ be integers with $b\geq3$ and $t\geq2$.
\begin{enumerate}
\item[(i)] Let $n,m\in\mathbb{N}$.  
Then  
$$\Gamma_{n}(\psi)\subseteq F_{n,m}(\psi).$$
\item[(ii)] Suppose $b$ and $t$ have the same prime divisors.  If $\psi:\mathbb{N}\to(0,\infty)$ satisfies $\psi(n)\leq b^{-\lceil\alpha_{2}n\rceil}$ for all $n\in\mathbb{N}$, then  
\begin{equation}\label{svweakform} \Gamma_{n}(\psi)\subseteq\left\{0\leq p\leq t^{n}:\frac{p}{t^{n}}\in E (\lceil\alpha_{2}n\rceil )\right\}.   \end{equation}
\end{enumerate}
\end{lemma}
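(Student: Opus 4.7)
My plan is to prove the two parts of Lemma~\ref{30} in sequence: part (i) is a purely geometric statement about how close the center $p/t^{n}$ of an intersecting ball must sit to one of the endpoints of a level-$m$ basic interval, and part (ii) then upgrades this inclusion to the equality ``$p/t^{n}$ is an endpoint'' by feeding in the arithmetic supplied by Lemma~\ref{divisible 5}.

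For part (i), I would fix $p\in\Gamma_{n}(\psi)$, pick a witness $x\in C(b,D)\cap B(p/t^{n},\psi(n))$, and note that the inclusion $C(b,D)\subseteq C_{m}(b,D)$ forces $x$ to lie in some basic interval $I_{m,i}=[q_{i}/b^{m},(q_{i}+1)/b^{m}]$ of length $b^{-m}$. A short trichotomy on the location of $p/t^{n}$ relative to $I_{m,i}$ then identifies an endpoint of $I_{m,i}$ at the required distance: if $p/t^{n}$ lies to one side of $I_{m,i}$, then the nearer endpoint is sandwiched between $p/t^{n}$ and $x$, so the distance from $p/t^{n}$ to that endpoint is at most $|p/t^{n}-x|<\psi(n)$; and if $p/t^{n}$ lies inside $I_{m,i}$, then the nearer endpoint is within $b^{-m}$ of it. In either case the distance is strictly less than $\max(\psi(n),b^{-m})$, giving $p\in F_{n,m}(\psi)$.

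For part (ii), I would specialise the conclusion of part (i) to $m=\lceil\alpha_{2}n\rceil$. The hypothesis $\psi(n)\leq b^{-m}$ collapses the maximum in the definition of $F_{n,m}(\psi)$ to $b^{-m}$, so part (i) produces an endpoint $q/b^{m}\in E(m)$ with $|p/t^{n}-q/b^{m}|<b^{-m}$. The decisive second ingredient is Lemma~\ref{divisible 5}: since $b$ and $t$ share the same prime divisors, $t^{n}$ divides $b^{m}$, and hence $p/t^{n}=Q/b^{m}$ for some integer $Q$. The displayed inequality then reads $|Q-q|<1$ between integers, which forces $Q=q$ and so $p/t^{n}=q/b^{m}\in E(m)$, which is exactly \eqref{svweakform}.

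I do not foresee a serious technical obstacle. The only point requiring a little care is keeping the inequalities strict in part (i) in the boundary case where $p/t^{n}$ coincides with an endpoint of $I_{m,i}$: there the claim is trivial since $p/t^{n}$ is its own witness at distance zero, so strictness is automatic. Once part (i) is in hand, part (ii) is essentially a one-line pigeonhole argument on the lattice $\tfrac{1}{b^{m}}\mathbb{Z}$, with Lemma~\ref{divisible 5} supplying the crucial fact that $p/t^{n}$ actually lies on that lattice.
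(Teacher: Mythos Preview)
Your proposal is correct and follows essentially the same approach as the paper. The only cosmetic difference is in part~(i): you organise the case analysis as a trichotomy on the position of $p/t^{n}$ relative to $I_{m,i}$, whereas the paper splits according to whether or not an endpoint of $I_{m,i}$ lies in the ball $B(p/t^{n},\psi(n))$; these are just two ways of packaging the same geometric observation, and part~(ii) is identical to the paper's argument.
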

\begin{proof}[\rm \textbf{Proof}] (i) Without loss of generality, we assume that $\Gamma_{n}(\psi)\neq\emptyset$. By definition, for every $p\in\Gamma_{n}(\psi)$, since $$C(b,D)\subseteq C_{m}(b,D)=\bigcup_{i=1}^{(\#D)^{m}}I_{m,i}  \, , $$ it follows that there exists $1\leq i\leq (\#D)^{m}$, such that 
$$ B\big(\textstyle{\frac{p}{t^{n}},\psi(n)}\big)\cap I_{m,i}\neq\emptyset.$$
We now consider two cases depending on  whether or not the endpoints $ \frac{q_{i}}{b^{m}},\frac{q_{i}+1}{b^{m}} $   of $I_{m,i}$ lie  in the ball $ B\left(\frac{p}{t^{n}},\psi(n)\right)$. 
\begin{itemize}
    \item {\em  There exists  one endpoint of $I_{m,i}$ that lies in 
$B\left(\frac{p}{t^{n}},\psi(n)\right)$.} Let us say $\frac{q_{i}}{b^{m}}\in B\left(\frac{p}{t^{n}},\psi(n)\right)$ without loss of generality. Then
$$\left|\frac{p}{t^{n}}-\frac{q_{i}}{b^{m}}\right|<\psi(n).$$
    \item {\em   Both endpoints of $I_{m,i}$ do not lie in $B\left(\frac{p}{t^{n}},\psi(n)\right)$.} In this case, we have 
$$B\big(\textstyle{\frac{p}{t^{n}},\psi(n)}\big) \subseteq I_{m,i}.$$
In particular, $\frac{p}{t^{n}}\in\left(\frac{q_{i}}{b^{m}},\frac{q_{i}+1}{b^{m}}\right)$. Thus,
$$\left|\frac{p}{t^{n}}-\frac{q_{i}}{b^{m}}\right|<\frac{1}{b^{m}}\quad {\rm and}\quad \left|\frac{p}{t^{n}}-\frac{q_{i}+1}{b^{m}}\right|<\frac{1}{b^{m}}.$$
\end{itemize}


\noindent On combining the above two cases, we obtain that there exists an endpoint $\frac{q}{b^{m}}\in E(m)$, such that $$\left|\frac{p}{t^{n}}-\frac{q}{b^{m}}\right|<\max\Big(\psi(n) ,\frac{1}{b^{m}}\Big).$$
Therefore, for every $p\in\Gamma_{n}(\psi)$ we have that  $p\in F_{n,m}(\psi)$ and so  this implies that 
$$\Gamma_{n}(\psi)\subseteq F_{n,m}(\psi).$$


\noindent  (ii) Fix $n\in\mathbb{N}$. Without loss of generality, we suppose that $\Gamma_{n}(\psi)\neq\emptyset$. It follows from Lemma $\ref{30}$ (i), on putting $m=\lceil \alpha_{2}n\rceil$,  that  
\begin{equation}\label{contain}
\Gamma_{n}(\psi)\subseteq F_{n,\lceil \alpha_{2}n\rceil}(\psi).
\end{equation}
 Let $p\in\Gamma_{n}(\psi)$. In view of \eqref{contain}, there exists $\frac{q}{b^{\lceil \alpha_{2}n\rceil}}\in E (\lceil\alpha_{2}n\rceil)$, such that
$$\left|\frac{p}{t^{n}}-\frac{q}{b^{\lceil \alpha_{2}n\rceil}}\right|<\max\Big(\psi(n),\frac{1}{b^{\lceil \alpha_{2}n\rceil}}\Big).$$
This together with the fact that $\psi(n)\leq b^{-\lceil \alpha_{2}n\rceil}$,  implies that
$$\left|\frac{p}{t^{n}}-\frac{q}{b^{\lceil \alpha_{2}n\rceil}}\right|<\frac{1}{b^{\lceil \alpha_{2}n\rceil}}.$$
Hence,
\begin{equation}\label{inequality}
\left|q-\frac{b^{\lceil \alpha_{2}n\rceil}}{t^{n}}p\right|<1.
\end{equation}
By Lemma \ref{divisible 5}, we know that $t^{n}\mid b^{\lceil \alpha_{2}n\rceil}$ and so the left-hand side of \eqref{inequality} is an integer. The upshot of this and \eqref{inequality} is that
$$q=\frac{b^{\lceil \alpha_{2}n\rceil}}{t^{n}}p.$$ Therefore,
$\ \displaystyle{\small{\frac{p}{t^{n}}=\frac{q}{b^{\lceil \alpha_{2}n\rceil}}  } \in E(\lceil\alpha_{2}n\rceil)}  \ $
as desired.
\textrm{}
\end{proof}

The next lemma brings into the play the quantities  $b_{*}$, $D_{1}$ and $D_{2}$  as  defined in \eqref{b_{*}}, \eqref{D_{1}} and \eqref{D_{2}}.  These are of course implicit in the statement of part (ii) of Proposition~\ref{ub}.
\begin{lemma}\label{32}
Suppose $b$ and $t$ are multiplicatively independent and have the same prime divisors  and let  $n\in\mathbb{N}$.

\begin{enumerate}
    \item[(i)] If $(\xi_{1},\xi_{2},\ldots,\xi_{n})\in(\{0,1,\ldots,b-1\}\setminus D_{1})^{n}$ and $b_{*}^{n}\mid\xi_{1}b^{n-1}+\xi_{2}b^{n-2} +\cdots+\xi_{n}$, then
\begin{equation*}
\xi_{1}=\xi_{2}=\cdots=\xi_{n}=0.
\end{equation*}
\item[(ii)]  If $(\xi_{1},\xi_{2},\ldots,\xi_{n})\in(\{0,1,\ldots,b-1\}\setminus D_{2})^{n}$ and 
$b_{*}^{n}\mid\xi_{1}b^{n-1}+\xi_{2}b^{n-2} +\cdots+\xi_{n}+1$, then
\begin{equation*}
\xi_{1}=\xi_{2}=\cdots=\xi_{n}=b-1.
\end{equation*}
\end{enumerate}
\end{lemma}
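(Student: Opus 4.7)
The plan is to prove both parts by downward induction on the index, exploiting the factorisation $b=b_{*}c$ where $c:=b/b_{*}=q_{1}^{v_{q_{1}}(b)}\cdots q_{k_{*}}^{v_{q_{k_{*}}}(b)}$. By construction $b_{*}$ and $c$ have disjoint prime supports (the primes $q_{k_{*}+1},\ldots,q_{K}$ versus $q_{1},\ldots,q_{k_{*}}$), so $\gcd(b_{*},c)=1$. Substituting $b^{n-i}=b_{*}^{n-i}c^{n-i}$ into the hypothesis rewrites the sum as $\sum_{i=1}^{n}\xi_{i}b_{*}^{n-i}c^{n-i}$, in which each term carries a visible power of $b_{*}$ depending on its position.

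For part (i), I reduce the divisibility $b_{*}^{n}\mid\sum_{i=1}^{n}\xi_{i}b_{*}^{n-i}c^{n-i}$ modulo $b_{*}$: every term with $i<n$ is already divisible by $b_{*}$, leaving $b_{*}\mid\xi_{n}$. Since $0\le\xi_{n}\le b-1$ and $\xi_{n}\notin D_{1}$, the only admissible multiple of $b_{*}$ in $\{0,\ldots,b-1\}\setminus D_{1}$ is $0$, so $\xi_{n}=0$. Assuming inductively that $\xi_{n}=\cdots=\xi_{n-j+1}=0$, I reduce modulo $b_{*}^{j+1}$: every surviving term with $i\le n-j-1$ contributes a factor $b_{*}^{n-i}$ with $n-i\ge j+1$ and vanishes, so the congruence collapses to $\xi_{n-j}b_{*}^{j}c^{j}\equiv 0\pmod{b_{*}^{j+1}}$. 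Cancelling $b_{*}^{j}$ and using $\gcd(c,b_{*})=1$ yields $b_{*}\mid\xi_{n-j}$, and the constraint $\xi_{n-j}\notin D_{1}$ forces $\xi_{n-j}=0$, closing the induction.

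For part (ii) the same skeleton applies with a small arithmetic twist. Reducing the hypothesis modulo $b_{*}$ gives $\xi_{n}+1\equiv 0\pmod{b_{*}}$, so $\xi_{n}\equiv b_{*}-1\pmod{b_{*}}$, forcing $\xi_{n}\in D_{2}\cup\{b-1\}$; the assumption $\xi_{n}\notin D_{2}$ then yields $\xi_{n}=b-1$. For the inductive step, once $\xi_{n}=\cdots=\xi_{n-j+1}=b-1$, the geometric-series identity $\sum_{\ell=0}^{j-1}(b-1)b^{\ell}=b^{j}-1$ collapses the tail, so
\[
\sum_{i=1}^{n}\xi_{i}b^{n-i}+1=\sum_{i=1}^{n-j}\xi_{i}b^{n-i}+b^{j}=\sum_{i=1}^{n-j}\xi_{i}b_{*}^{n-i}c^{n-i}+b_{*}^{j}c^{j}.
\]
Reducing modulo $b_{*}^{j+1}$ kills all terms with $i\le n-j-1$, leaving $(\xi_{n-j}+1)b_{*}^{j}c^{j}\equiv 0\pmod{b_{*}^{j+1}}$. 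The same cancellation as before produces $\xi_{n-j}\equiv b_{*}-1\pmod{b_{*}}$, and the restriction $\xi_{n-j}\notin D_{2}$ forces $\xi_{n-j}=b-1$.

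The main obstacle I anticipate is organising the telescoping step in part (ii) cleanly, since the identity that turns the "$b-1$ tail plus the extra $+1$" into a single $b^{j}$ is what keeps the inductive step perfectly parallel to part (i). Beyond this, the argument is structurally just the observation that $b_{*}$ is precisely the "$\alpha_{1}$-part" of $b$ and is coprime to the complementary factor $c$; the sets $D_{1}$ and $D_{2}$ are tailor-made to exclude exactly the nonzero residues modulo $b_{*}$ that would otherwise obstruct the base and inductive congruences.
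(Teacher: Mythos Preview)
Your proof is correct. For part (i) your downward peeling of the digits is exactly the paper's induction on $n$ unwound: both show $b_{*}\mid\xi_{n}$, force $\xi_{n}=0$ using the definition of $D_{1}$, cancel a factor of $b_{*}$ via $\gcd(b_{*},b/b_{*})=1$, and repeat.

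For part (ii) there is a genuine (if minor) difference. You run a second direct induction parallel to part (i), using the telescoping identity $\sum_{\ell=0}^{j-1}(b-1)b^{\ell}+1=b^{j}$ to keep the tail clean. The paper instead observes that $D_{2}=\{b-1\}-D_{1}$ and substitutes $\eta_{i}:=b-1-\xi_{i}$; since $b_{*}^{n}\mid b^{n}$, the hypothesis becomes $b_{*}^{n}\mid\eta_{1}b^{n-1}+\cdots+\eta_{n}$ with $\eta_{i}\notin D_{1}$, reducing (ii) to (i) in one stroke. The paper's route is shorter and makes the symmetry between $D_{1}$ and $D_{2}$ explicit; yours is self-contained and avoids the change of variables. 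Both amount to the same underlying arithmetic.
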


\bigskip

\begin{remark}
It is worth highlighting that by the definitions  of $D_{1} $ and $D_{2} $, we have 
\begin{equation}  \label{wedding}
    D_{2}=\{b-1\}-D_{1}   \, .
\end{equation}
In light of this observation, it will become apparent during the proof of the lemma that the two statements of the lemma are, in fact, equivalent. 
\end{remark}

\medskip 

\begin{proof}[\rm \textbf{Proof}]
To prove part (i) of the lemma,  we use  induction on $n\in\mathbb{N}$.   For $n=1$, we have that
$\xi_{1}\notin D_{1} $ and $b_{*}\mid\xi_{1}$. Thus,   by the  the definition  of $D_{1} $   (see \eqref{D_{1}}), we have that $\xi_{1}=0$ as desired. Now let $n\geq2$, and suppose that part~(i) is true for $n-1$. We now show  that it is true for $n$. Let $(\xi_{1},\xi_{2},\ldots,\xi_{n})\in(\{0,1,\ldots,b-1\}\setminus D_{1})^{n}$ with
$b_{*}^{n}\mid \xi_{1}b^{n-1}+\xi_{2}b^{n-2} +\cdots+\xi_{n-1}b+\xi_{n}$. In particular, $b_{*}\mid \xi_{1}b^{n-1}+\xi_{2}b^{n-2} +\cdots+\xi_{n-1}b+\xi_{n}$. Since $b_{*}\mid b$, it follows that
$$b_{*}\mid\xi_{1}b^{n-1}+\xi_{2}b^{n-2} +\cdots+\xi_{n-1}b$$
and so  $b_{*}\mid\xi_{n}$. This together with the fact that $\xi_{n}\notin D_{1}$ implies that   $\xi_{n}=0$. Hence, $b_{*}^{n}\mid \xi_{1}b^{n-1}+\xi_{2}b^{n-2} +\cdots+\xi_{n-1}b$, and so $$b_{*}^{n-1}\mid\frac{b}{b_{*}}( \xi_{1}b^{n-2}+\xi_{2}b^{n-3}+\cdots+\xi_{n-1}).$$ Now, $\gcd\left(b_{*}^{n-1},\frac{b}{b_{*}}\right)=1$
thus we must have that 
$$b_{*}^{n-1}\mid \xi_{1}b^{n-2}+\xi_{2}b^{n-3}+\cdots+\xi_{n-1}.$$
In view of the inductive hypothesis, we deduce that  $$\xi_{1}=\xi_{2}=\cdots=\xi_{n-1}=0.$$ This completes the induction step and so establishes  part (i) of the lemma.   \\

 We now turn our attention to proving part (ii) of the lemma.  
Let $(\xi_{1},\xi_{2},\ldots,\xi_{n})\in(\{0,1,\ldots,b-1\}\setminus D_{2})^{n}$ with
$b_{*}^{n}\mid\xi_{1}b^{n-1}+\xi_{2}b^{n-2} +\cdots+\xi_{n} +1$. 
Then, using the relationship \eqref{wedding} linking $D_{1} $ and $D_{2} $, it follows that
$$\big( (b-1)-\xi_{1},(b-1)-\xi_{2},\ldots,(b-1)-\xi_{n}  \big )\in\big(\{0,1,\ldots,b-1\}\setminus D_{1}\big)^{n}.$$
Furthermore, since  $b_{*}\mid b$, it follows that
\begin{equation*}
\begin{aligned}
b_{*}^{n}\mid & b^{n}-(\xi_{1}b^{n-1}+\xi_{2}b^{n-2} +\cdots+\xi_{n}+1) \\[2ex]
&=(b-1)b^{n-1}+(b-1)b^{n-2} +\cdots+b-1+1-(\xi_{1}b^{n-1}+\xi_{2}b^{n-2} +\cdots+\xi_{n}+1)\\[2ex]
&=(b-1-\xi_{1})b^{n-1}+(b-1-\xi_{2})b^{n-2}+\cdots+b-1-\xi_{n}.
\end{aligned}
\end{equation*}
In view of part (i) of Lemma \ref{32}, we have  that 
$$b-1-\xi_{1}=b-1-\xi_{2}=\cdots=b-1-\xi_{n}=0,$$
and so 
$\xi_{1}=\xi_{2}=\cdots=\xi_{n}=b-1$ as desired.
\textrm{}
\end{proof}

We now pause before stating and proving our  final lemma  (from which  part (ii) of Proposition $\ref{ub}$ will follow easily), in order  to consider the following claim.
\begin{itemize}
    \item[]\textbf{Claim:}   {\em if $\alpha_1$ {\rm(}defined in \eqref{18}{\rm)} and $n \in \mathbb{N}$  are  such that  $\alpha_1n $ is an integer,  then  under the assumption that  $ D\subseteq D_{*}$,   Lemma~\ref{32}  enables us to  replace  $\alpha_2 $ in \eqref{svweakform}  by $\alpha_1$.}
\end{itemize}
 This strengthen of Lemma~\ref{30} (ii)  is significant, since it implies that if $ D\subseteq D_{*}$,   then   
$$\#\Gamma_{n}(\psi)   \ \le \  \#E(\alpha_1n)   \ \ll \    (\#D)^{\alpha_{1}n}=b^{\gamma\alpha_{1}n}   \, ,    
$$
where $ \gamma $ as usual is given by   \eqref{dim-cantorSV}.  
In other words,  we obtain  the desired upper bound  \eqref{better}  appearing in part (ii) of Proposition $\ref{ub}$. Indeed, this totally  completes the proof of the proposition in the case where  $\alpha_1 $ is an integer.  In this sense,  the claim provides the motivation for  the ``all powerful''  final lemma.  

To establish the claim,  we note that under the  condition that $\psi(n)\leq  b^{-\lceil\alpha_{2}n\rceil}$,  Lemma \ref{30} (ii) implies that   \eqref{svweakform} holds; that is to say that for any $p\in\Gamma_{n}(\psi)$, there exists $(\xi_{1},\ldots,\xi_{\lceil\alpha_{2}n\rceil})\in D^{\lceil\alpha_{2}n\rceil}$, such that
$$\frac{p}{t^{n}}=\frac{\xi_{1}}{b}+\frac{\xi_{2}}{b^{2}}+\cdots+\frac{\xi_{\lceil\alpha_{2}n\rceil}}{b^{\lceil\alpha_{2}n\rceil}}=\frac{\xi_{1}b^{\lceil\alpha_{2}n\rceil-1}+\xi_{2}b^{\lceil\alpha_{2}n\rceil-2}+\cdots+\xi_{\lceil\alpha_{2}n\rceil}}{b^{\lceil\alpha_{2}n\rceil}}$$
or
$$\frac{p}{t^{n}}=\frac{\xi_{1}}{b}+\frac{\xi_{2}}{b^{2}}+\cdots+\frac{\xi_{\lceil\alpha_{2}n\rceil}}{b^{\lceil\alpha_{2}n\rceil}}+\frac{1}{b^{\lceil\alpha_{2}n\rceil}}=\frac{\xi_{1}b^{\lceil\alpha_{2}n\rceil-1}+\xi_{2}b^{\lceil\alpha_{2}n\rceil-2}+\cdots+\xi_{\lceil\alpha_{2}n\rceil}+1}{b^{\lceil\alpha_{2}n\rceil}}$$
Furthermore, by Lemma \ref{divisible 5}, we know that $t^{n}\mid b^{\lceil\alpha_{2}n\rceil}$. 
Thus, is follows that 
$$\frac{b^{\lceil\alpha_{2}n\rceil}}{t^{n}} \mid \xi_{1}b^{\lceil\alpha_{2}n\rceil-1}+\xi_{2}b^{\lceil\alpha_{2}n\rceil-2}+\cdots+\xi_{\lceil\alpha_{2}n\rceil}  $$
or
$$\frac{b^{\lceil\alpha_{2}n\rceil}}{t^{n}}\mid \xi_{1}b^{\lceil\alpha_{2}n\rceil-1}+\xi_{2}b^{\lceil\alpha_{2}n\rceil-2}+\cdots+\xi_{\lceil\alpha_{2}n\rceil}+1.$$
Now, from the definition of $\alpha_{1}$ \eqref{18} and $k_{*}$ \eqref{k_{*}}, we have  that
\begin{equation}\label{equality}
t=b^{\alpha_{1}}\prod_{i=1}^{k_{*}}q_{i}^{v_{q_{i}}(t)-\alpha_{1}v_{q_{i}}(b)}.
\end{equation}
Recall that $v_{q}(b)$ is the greatest integer such that $q^{v_{q}(b)}\mid b$. It follows from \eqref{equality} and the definition of $b_{*}$ (see \eqref{b_{*}}) that
\begin{equation}\label{equality1}
\frac{b^{\lceil\alpha_{2}n\rceil}}{t^{n}}=b_{*}^{\lceil\alpha_{2}n\rceil-\alpha_{1}n}\prod_{i=1}^{k_{*}}q_{i}^{v_{q_{i}}(b)\lceil\alpha_{2}n\rceil-v_{q_{i}}(t)n}.
\end{equation}
If $\alpha_{1}n\in\mathbb{N}$, then 
$\lceil\alpha_{2}n\rceil-\alpha_{1}n\in\mathbb{N}$.  This together with \eqref{equality1} allows us to conclude that
\begin{equation}\label{divisible 6}  
b_{*}^{\lceil\alpha_{2}n\rceil-\alpha_{1}n}\mid\frac{b^{\lceil\alpha_{2}n\rceil}}{t^{n}}.
\end{equation}
Then under the condition that $D\subseteq D_{*}$, we can apply Lemma \ref{32} to conclude that
$$\xi_{\alpha_{1}n+1}=\xi_{\alpha_{1}n+2}=\cdots=\xi_{\lceil\alpha_{2}n\rceil}=0\quad {\rm if}\quad \frac{b^{\lceil\alpha_{2}n\rceil}}{t^{n}}\mid  \xi_{1}b^{\lceil\alpha_{2}n\rceil-1}+\xi_{2}b^{\lceil\alpha_{2}n\rceil-2}+\cdots+\xi_{\lceil\alpha_{2}n\rceil}$$
and  
$$\xi_{\alpha_{1}n+1}=\xi_{\alpha_{1}n+2}=\cdots=\xi_{\lceil\alpha_{2}n\rceil}=b-1\quad {\rm if}\quad \frac{b^{\lceil\alpha_{2}n\rceil}}{t^{n}}\mid \xi_{1}b^{\lceil\alpha_{2}n\rceil-1}+\xi_{2}b^{\lceil\alpha_{2}n\rceil-2}+\cdots+\xi_{\lceil\alpha_{2}n\rceil}+1.$$
The upshot is  that $\frac{p}{t^n}  \in E(\alpha_1 n)$  and we have shown that if $\alpha_{1}n\in\mathbb{N}$ then 
\begin{equation}  \label{svstrongform} \Gamma_{n}(\psi)\subseteq\left\{0\leq p\leq t^{n}:\textstyle{\frac{p}{t^{n}}}\in E (\alpha_{1}n)\right\}  \end{equation}
as claimed.   Note that in the  above argument, when   establishing the claim, 
it is absolutely paramount that   $\alpha_{1}n\in\mathbb{N}$. Indeed, we cannot  apply   Lemma~\ref{32}    if   $\alpha_{1}n\notin\mathbb{N}$. To overcome this deficiency 
we need to work a little  harder  from a technical point of view.  The following lemma  provides the appropriate general analogue of the claim and in turn a suitable strengthening of Lemma~\ref{32} under the assumption that  $ D\subseteq D_{*}$.  First a little more notation.   Recall, that by definition, in general $\alpha_{1}\in\mathbb{Q}$.  With this in mind, we write  
\begin{equation}\label{equality 4}
\alpha_{1}=\frac{l_{1}}{l_{0}}  \quad  {\rm where}\quad  l_{0},l_{1}\in\mathbb{N}\quad  {\rm and}\quad  \gcd(l_{0},l_{1})=1.  
\end{equation}
In turn, given  $ n \in  \mathbb{N}$, we  can then write 
\begin{equation}\label{equality 5} 
n=l_{0}\tilde{n}+r \quad {\rm where}\quad \tilde{n}:=\left\lfloor\frac{n}{l_{0}}\right\rfloor\quad {\rm and}\quad r:=n-l_{0}\left\lfloor\frac{n}{l_{0}}\right\rfloor.
\end{equation}

\medskip

\begin{lemma}\label{important}
Suppose $b$ and $t$ are multiplicatively independent and have the same prime divisors. If  $\psi:\mathbb{N}\to(0,\infty)$  satisfies $\psi(n)\le b^{-\lceil\alpha_{2}n\rceil-1}$  for all $n\in\mathbb{N}$, then
\begin{equation}\label{qwrt}\Gamma_{n}(\psi)\subseteq G_{n}:=\left\{0\le p\le t^{n}:\frac{p}{t^{n}}\in E (\lceil\alpha_{2}l_{0}\tilde{n}\rceil+\lceil\alpha_{2}r\rceil) \right\}.
\end{equation}
Furthermore,  if in addition  $D\subseteq D_{*}$, then
\begin{equation}\label{contain1}
\begin{aligned}
    G_{n} 
   &\subseteq\left\{0\le p\le t^{n}:\frac{p}{t^{n}}\in E(\alpha_{1}l_{0}\tilde{n} +\lceil\alpha_{2}r\rceil)   \right\}.
\end{aligned}
\end{equation}
\end{lemma}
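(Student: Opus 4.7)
My plan is to prove the two inclusions separately. For the first, $\Gamma_n(\psi)\subseteq G_n$, I would directly mimic the argument of Lemma~\ref{30}(ii) with the exponent $\lceil\alpha_2 n\rceil$ replaced by $m:=\lceil\alpha_2 l_0\tilde n\rceil+\lceil\alpha_2 r\rceil$. Using the elementary inequality $\lceil x\rceil+\lceil y\rceil\in\{\lceil x+y\rceil,\lceil x+y\rceil+1\}$ applied with $x=\alpha_2 l_0\tilde n$ and $y=\alpha_2 r$ one obtains $\lceil\alpha_2 n\rceil\le m\le\lceil\alpha_2 n\rceil+1$; the lower bound together with Lemma~\ref{divisible 5} gives $t^n\mid b^m$, while the upper bound combined with the hypothesis $\psi(n)\le b^{-\lceil\alpha_2 n\rceil-1}$ gives $\psi(n)\le b^{-m}$. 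Applying Lemma~\ref{30}(i) at level $m$ then produces, for each $p\in\Gamma_n(\psi)$, some $q/b^m\in E(m)$ with $|p/t^n-q/b^m|<b^{-m}$; multiplying through by $b^m$ and invoking $t^n\mid b^m$ forces $q=pb^m/t^n$, and hence $p/t^n\in E(m)$.

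For the second inclusion I would fix $p\in G_n$ and set $m':=\alpha_1 l_0\tilde n+\lceil\alpha_2 r\rceil=l_1\tilde n+\lceil\alpha_2 r\rceil$, so that $k:=m-m'=\lceil\alpha_2 l_0\tilde n\rceil-l_1\tilde n$ is a non-negative integer. Any element of $E(m)$ takes one of two forms, so write either $p/t^n=q/b^m$ or $p/t^n=(q+1)/b^m$ with $q=\xi_1 b^{m-1}+\cdots+\xi_m$, $\xi_j\in D$. In both cases the relevant numerator equals $pb^m/t^n$, so the task reduces to proving the purely arithmetic divisibility $b_*^k\mid b^m/t^n$: from this, since $b_*\mid b$, one extracts either $b_*^k\mid\xi_{m'+1}b^{k-1}+\cdots+\xi_m$ or $b_*^k\mid\xi_{m'+1}b^{k-1}+\cdots+\xi_m+1$. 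The hypothesis $D\subseteq D_*$ places $(\xi_{m'+1},\ldots,\xi_m)$ inside both $(\{0,\ldots,b-1\}\setminus D_1)^k$ and $(\{0,\ldots,b-1\}\setminus D_2)^k$, so Lemma~\ref{32}(i) or (ii) applies and forces $\xi_{m'+1}=\cdots=\xi_m=0$ in the first case and $\xi_{m'+1}=\cdots=\xi_m=b-1$ in the second. A trivial rearrangement then shows $p/t^n$ to be an endpoint at level $m'$, giving $p/t^n\in E(m')$.

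The divisibility $b_*^k\mid b^m/t^n$ is the one step where the possible non-integrality of $\alpha_1$ forces real work, and I expect it to be the main obstacle. To handle it I would raise \eqref{equality} to the $l_0$-th power to obtain the integer identity $t^{l_0}=b^{l_1}\prod_{i=1}^{k_*}q_i^{l_0 v_{q_i}(t)-l_1 v_{q_i}(b)}$, then decompose $t^n=(t^{l_0})^{\tilde n}t^r$ and $b^m=b^{l_1\tilde n}\,b^k\,b^{\lceil\alpha_2 r\rceil}$ and use $b^k=b_*^k\prod_{i=1}^{k_*}q_i^{kv_{q_i}(b)}$ (direct from the definition of $b_*$) to arrive at
\[
\frac{b^m}{t^n}\;=\;b_*^{\,k}\prod_{i=1}^{k_*}q_i^{\,kv_{q_i}(b)-\tilde n(l_0 v_{q_i}(t)-l_1 v_{q_i}(b))}\cdot\frac{b^{\lceil\alpha_2 r\rceil}}{t^r}.
\]
The last factor is a positive integer by Lemma~\ref{divisible 5} applied with $r$ in place of $n$, and the non-negativity of each exponent of $q_i$ follows from $k\ge l_0(\alpha_2-\alpha_1)\tilde n$ together with the defining inequality $v_{q_i}(t)\le\alpha_2 v_{q_i}(b)$ of $\alpha_2$. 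This yields the required $b_*^k$-divisibility and completes the proof.
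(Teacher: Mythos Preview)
Your proof is correct and follows essentially the same route as the paper's. Both parts proceed identically to the paper: for the first inclusion you apply Lemma~\ref{30}(i) at level $m=\lceil\alpha_2 l_0\tilde n\rceil+\lceil\alpha_2 r\rceil$ using $\psi(n)\le b^{-m}$ and $t^n\mid b^m$, and for the second you establish $b_*^{k}\mid b^m/t^n$ (your exponent $kv_{q_i}(b)-\tilde n(l_0 v_{q_i}(t)-l_1 v_{q_i}(b))$ simplifies to the paper's $v_{q_i}(b)\lceil\alpha_2 l_0\tilde n\rceil-v_{q_i}(t)l_0\tilde n$) and then invoke Lemma~\ref{32}, exactly as in the paper's Cases~1 and~2.
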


\bigskip

\begin{remark}\label{trivial}
The conclusion \eqref{contain1} is trivial for the case $n<l_{0}$. Indeed, in this case, we have $\tilde{n}=\left\lfloor\frac{n}{l_{0}}\right\rfloor=0$ and so  by definition,  
$G_{n}$ coincides with the right hand side of  \eqref{contain1}.  

\end{remark}

 \bigskip 
With Lemma~\ref{important} at hand it 
is easy to establish  Proposition \ref{ub} (ii). 
Indeed, it follows from Lemma~\ref{important} that
$$\#\Gamma_{n}(\psi)\leq\#G_{n}\leq(\#D)^{\alpha_{1}l_{0}\tilde{n}+\lceil\alpha_{2}r\rceil}\leq(\#D)^{\alpha_{1}l_{0}\tilde{n}+\alpha_{2}r+1}\leq(\#D)^{\alpha_{1}l_{0}\tilde{n}+\alpha_{2}(l_{0}-1)+1}.$$
The last inequality follows from the fact that  $r\leq l_{0}-1$. Now  $l_{0}\tilde{n}\leq n$, so the upshot is that 
$$\#\Gamma_{n}(\psi)\ll b^{\alpha_{1}\gamma n}   $$
where $ \gamma $ as usual is given by   \eqref{dim-cantorSV}.   This is precisely the  upper bound  \eqref{better}  appearing in part (ii) of Proposition $\ref{ub}$ and so we are done.

 \bigskip 
 
\begin{proof}[\rm \textbf{Proof of Lemma \ref{important}}]
Fix $n\in\mathbb{N}$. Let $m_{0}:=\lceil\alpha_{2}l_{0}\tilde{n}\rceil+\lceil\alpha_{2}r\rceil$. Recall that $l_{0}$, $\tilde{n}$ and $r$ are defined in \eqref{equality 4} and 
\eqref{equality 5}. It follows from Lemma $\ref{30}$ with $m=m_{0}$, that
\begin{equation*} 
\Gamma_{n}(\psi)\subseteq F_{n,m_{0}}(\psi) \, . 
\end{equation*}
Thus,   \eqref{qwrt}  follows on showing that 
\begin{equation}\label{contain2}
 F_{n,m_{0}}(\psi)\subseteq G_{n}. 
\end{equation}
Without loss of generality, we suppose that $ F_{n,m_{0}}(\psi)\neq\emptyset$. Otherwise, \eqref{contain2} is trivial. For any $p\in F_{n,m_{0}}(\psi)$, there exists $\frac{q}{b^{m_{0}}}\in E(m_{0})$, such that
$$\left|\frac{p}{t^{n}}-\frac{q}{b^{m_{0}}}\right|<\max\Big(\psi(n),\frac{1}{b^{m_{0}}}\Big).$$
Since
\begin{equation*}
\psi(n)\leq b^{-(1+\lceil\alpha_{2} n\rceil)} =b^{-(1+\lceil\alpha_{2}(l_{0}\tilde{n}+r)\rceil)}\leq b^{-(\lceil\alpha_{2}l_{0}\tilde{n}\rceil+\lceil\alpha_{2}r\rceil)}=b^{-m_{0}},
\end{equation*}
where the last inequality makes use of the  fact that $\lceil x\rceil+\lceil y\rceil\leq\lceil x+y\rceil+1$ for all $x,y\in\mathbb{R}$, it follows that
\begin{equation}\label{37}
\left|\frac{p}{t^{n}}-\frac{q}{b^{m_{0}}}\right|<\frac{1}{b^{m_{0}}}.
\end{equation}
Then we show that $\frac{b^{m_{0}}}{t^{ n}}\in\mathbb{N}$. By Lemma \ref{divisible 5}, we have  that
\begin{equation}\label{qaz} t^{n}\mid b^{\lceil \alpha_{2}n\rceil}  \, .
\end{equation}
Furthermore, by the fact that $\lceil x+y\rceil\leq\lceil x\rceil+\lceil y\rceil$ for all $x,y\in\mathbb{R}$, we have that 
$$\lceil\alpha_{2}n\rceil=\lceil\alpha_{2}l_{0}\tilde{n}+\alpha_{2}r\rceil   \ \leq  \ \lceil\alpha_{2}l_{0}\tilde{n}\rceil+\lceil\alpha_{2}r\rceil=m_{0}.$$
It follows that $b^{\lceil \alpha_{2}n\rceil}\mid b^{m_{0}}$. This together with  
\eqref{qaz}
implies that
\begin{equation}\label{divisible}
t^{n}\mid b^{m_{0}}.
\end{equation}
The upshot of \eqref{37} and \eqref{divisible}  is that
$$\frac{p}{t^{n}}=\frac{q}{b^{m_{0}}}\in E(m_{0})  \,  $$
and this validates \eqref{contain2} as desired.

\medskip 

For the ``furthermore'' part of the lemma, by Remark \ref{trivial}  we can assume  that $n\geq l_{0}$ and without loss of generality that $G_{n}\neq\emptyset$. Recall that $m_{0}=\lceil\alpha_{2}l_{0}\tilde{n}\rceil+\lceil\alpha_{2}r\rceil$. For any $p\in G_{n}$, we have $\frac{p}{t^{n}}\in E(m_{0})$. That is, $\frac{p}{t^{n}}$ is the endpoint of some $m_{0}$-th level  basic intervals for $C(b,D)$.  The goal is to show that  
\begin{equation}   \label{svqaz}
\frac{p}{t^{n}}\in E(\alpha_{1}l_{0}\tilde{n} +\lceil\alpha_{2}r\rceil)  \, .    
\end{equation}
For this we consider two cases. 
\medskip 

\noindent \textbf{Case 1:} The point $\frac{p}{t^{n}}$ is the  left endpoint of some $m_{0}$-th basic intervals for $C(b,D)$. In this case,
there exists $(\xi_{1},\ldots,\xi_{m_{0}})\in D^{m_{0}}$, such that
$$\frac{p}{t^{n}}=\frac{\xi_{1}}{b}+\frac{\xi_{2}}{b^{2}}+\cdots+\frac{\xi_{ m_{0}}}{b^{m_{0}}}=\frac{\xi_{1}b^{m_{0}-1}+\xi_{2}b^{m_{0}-2}+\cdots+ \xi_{ m_{0}}}{b^{m_{0} }}.$$
By \eqref{divisible}, we know that $\frac{b^{m_{0}}}{t^{ n}}\in\mathbb{N}$. Thus,
\begin{equation}\label{divisible1}
\frac{b^{m_{0}}}{t^{n}}\mid\xi_{1}b^{m_{0}-1}+\xi_{2}b^{m_{0}-2}+\cdots+ \xi_{ m_{0}} .
\end{equation}
We  now show that 
\begin{equation}\label{divisible2}
b_{*}^{\lceil\alpha_{2}l_{0}\tilde{n}\rceil-\alpha_{1}l_{0}\tilde{n}}\mid\frac{b^{m_{0}}}{t^{n}}.
\end{equation}
It follows from \eqref{equality} that
\begin{equation*}
\begin{aligned}
\frac{b^{\lceil\alpha_{2}l_{0}\tilde{n}\rceil}}{t^{l_{0}\tilde{n}}}&=\frac{b^{\lceil\alpha_{2}l_{0}\tilde{n}\rceil}}{b^{\alpha_{1}l_{0}\tilde{n}}\prod_{i=1}^{k_{*}}q_{i}^{(v_{q_{i}}(t)-\alpha_{1}v_{q_{i}}(b))l_{0}\tilde{n}}} \\[2ex]
&=\frac{b^{\lceil\alpha_{2}l_{0}\tilde{n}\rceil-\alpha_{1}l_{0}\tilde{n}}}{\prod_{i=1}^{k_{*}}q_{i}^{v_{q_{i}}(b)(\lceil\alpha_{2}l_{0}\tilde{n}\rceil-\alpha_{1}l_{0}\tilde{n})}}\times
\frac{\prod_{i=1}^{k_{*}}q_{i}^{v_{q_{i}}(b)(\lceil\alpha_{2}l_{0}\tilde{n}\rceil-\alpha_{1}l_{0}\tilde{n})}}{\prod_{i=1}^{k_{*}}q_{i}^{(v_{q_{i}}(t)-\alpha_{1}v_{q_{i}}(b))l_{0}\tilde{n}}}  \\[2ex]
&=b_{*}^{\lceil\alpha_{2}l_{0}\tilde{n}\rceil-\alpha_{1}l_{0}\tilde{n}}\prod_{i=1}^{k_{*}}q_{i}^{v_{q_{i}}(b)\lceil\alpha_{2}l_{0}\tilde{n}\rceil-v_{q_{i}}(t)l_{0}\tilde{n}}.
\end{aligned}
\end{equation*}
Since $\frac{v_{q_{i}}(t)}{v_{q_{i}}(b)}\leq \alpha_{2}$ for each $1\leq i\leq k_{*}$, we have that 
$$v_{q_{i}}(b)\lceil\alpha_{2}l_{0}\tilde{n}\rceil-v_{q_{i}}(t)l_{0}\tilde{n}\geq v_{q_{i}}(b)\alpha_{2}l_{0}\tilde{n}-v_{q_{i}}(t)l_{0}\tilde{n}=(v_{q_{i}}(b)\alpha_{2}-v_{q_{i}}(t))l_{0}\tilde{n}\geq 0,$$
which implies that 
$$\prod_{i=1}^{k_{*}}q_{i}^{v_{q_{i}}(b)\lceil\alpha_{2}l_{0}\tilde{n}\rceil-v_{q_{i}}(t)l_{0}\tilde{n}}\in\mathbb{N}.$$
Hence,
$$b_{*}^{\lceil\alpha_{2}l_{0}\tilde{n}\rceil-\alpha_{1}l_{0}\tilde{n}}\mid\frac{b^{\lceil\alpha_{2}l_{0}\tilde{n}\rceil}}{t^{l_{0}\tilde{n}}}.$$
This together with the fact that $t^{r}\mid b^{\lceil\alpha_{2}r\rceil}$ (see Lemma~\ref{divisible 5}), $n=l_{0}\tilde{n}+r$ and $m_{0}=\lceil\alpha_{2}l_{0}\tilde{n}\rceil+\lceil\alpha_{2}r\rceil$ implies  \eqref{divisible2}. The upshot of \eqref{divisible1} and \eqref{divisible2} is that
\begin{equation}\label{divisible3}
\begin{aligned}
b_{*}^{\lceil\alpha_{2}l_{0}\tilde{n}\rceil-\alpha_{1}l_{0}\tilde{n}}&\mid\xi_{1}b^{m_{0}-1}+\xi_{2}b^{m_{0}-2} +\cdots+\xi_{\alpha_{1}l_{0}\tilde{n}+\lceil\alpha_{2} r\rceil}b^{\lceil\alpha_{2}l_{0}\tilde{n}\rceil-\alpha_{1}l_{0}\tilde{n}} \\[2ex]
& \ \ \ +   \ \xi_{\alpha_{1}l_{0}\tilde{n}+\lceil\alpha_{2} r\rceil+1}b^{\lceil\alpha_{2}l_{0}\tilde{n}\rceil-\alpha_{1}l_{0}\tilde{n}-1}+  \xi_{\alpha_{1}l_{0}\tilde{n}+\lceil\alpha_{2} r\rceil+2}b^{\lceil\alpha_{2}l_{0}\tilde{n}\rceil-\alpha_{1}l_{0}\tilde{n}-2}+\cdots+\xi_{m_{0}}.
\end{aligned}
\end{equation}
Since $b_{*}\mid b$, it follows that
$$b_{*}^{\lceil\alpha_{2}l_{0}\tilde{n}\rceil-\alpha_{1}l_{0}\tilde{n}}\mid\xi_{1}b^{m_{0}-1}+\xi_{2}b^{m_{0}-2} +\cdots+\xi_{\alpha_{1}l_{0}\tilde{n}+\lceil\alpha_{2} r\rceil}b^{\lceil\alpha_{2}l_{0}\tilde{n}\rceil-\alpha_{1}l_{0}\tilde{n}} .$$
This together with \eqref{divisible3} gives 
\begin{equation}  \label{10Jan}
b_{*}^{\lceil\alpha_{2}l_{0}\tilde{n}\rceil-\alpha_{1}l_{0}\tilde{n}}\mid\xi_{\alpha_{1}l_{0}\tilde{n}+\lceil\alpha_{2} r\rceil+1}b^{\lceil\alpha_{2}l_{0}\tilde{n}\rceil-\alpha_{1}l_{0}\tilde{n}-1}+  \xi_{\alpha_{1}l_{0}\tilde{n}+\lceil\alpha_{2} r\rceil+2}b^{\lceil\alpha_{2}l_{0}\tilde{n}\rceil-\alpha_{1}l_{0}\tilde{n}-2}+\cdots+\xi_{m_{0}} .
\end{equation}
Now, since  $n\geq l_{0}$ it follows that $\tilde{n}=\left\lfloor\frac{n}{l_{0}}\right\rfloor\geq1$, which implies that
$$\lceil\alpha_{2}l_{0}\tilde{n}\rceil-\alpha_{1}l_{0}\tilde{n}\geq\alpha_{2}l_{0}\tilde{n}-\alpha_{1}l_{0}\tilde{n}=(\alpha_{2}-\alpha_{1})l_{0}\tilde{n}>0.$$
Thus, $\lceil\alpha_{2}l_{0}\tilde{n}\rceil-\alpha_{1}l_{0}\tilde{n}\in\mathbb{N}$. Since $$ \xi_{\alpha_{1}l_{0}\tilde{n}+\lceil\alpha_{2} r\rceil+1},\xi_{\alpha_{1}l_{0}\tilde{n}+\lceil\alpha_{2} r\rceil+2},\ldots,\xi_{m_{0}}\in D\subseteq D_{*},$$ we have that $$ \xi_{\alpha_{1}l_{0}\tilde{n}+\lceil\alpha_{2} r\rceil+1},\xi_{\alpha_{1}l_{0}\tilde{n}+\lceil\alpha_{2} r\rceil+2},\ldots,\xi_{m_{0}}\notin D_{1}.$$ As a consequence of Lemma \ref{32} (i), it follows  that
$$\xi_{\alpha_{1}l_{0}\tilde{n}+\lceil\alpha_{2} r\rceil+1}=\xi_{\alpha_{1}l_{0}\tilde{n}+\lceil\alpha_{2} r\rceil+2}=\cdots=\xi_{m_{0}}=0   \, , $$
and thus
\begin{equation*}
\begin{aligned}
\frac{p}{t^{n}}&=\frac{\xi_{1}}{b}+\frac{\xi_{2}}{b^{2}}+\cdots+\frac{\xi_{ m_{0}}}{b^{m_{0}}}\\[2ex] 
&=\frac{\xi_{1}}{b}+\frac{\xi_{2}}{b^{2}}+\cdots+\frac{\xi_{\alpha_{1}l_{0}\tilde{n}+\lceil\alpha_{2} r\rceil}}{b^{\alpha_{1}l_{0}\tilde{n}+\lceil\alpha_{2} r\rceil}} \in \  E(\alpha_{1}l_{0}\tilde{n} +\lceil\alpha_{2}r\rceil).
\end{aligned}
\end{equation*}
In other words,  \eqref{svqaz} is true when we are in \textbf{Case 1}.

\medskip 

\noindent \textbf{Case 2:} The point $\frac{p}{t^{n}}$ is the right endpoint of some $m_{0}$-th basic intervals for $C(b,D)$. In this case,
there exists $(\xi_{1},\ldots,\xi_{ m_{0}})\in D^{m_{0}}$, such that
$$\frac{p}{t^{n}}=\frac{\xi_{1}}{b}+\frac{\xi_{2}}{b^{2}}+\cdots+\frac{\xi_{ m_{0}}}{b^{m_{0}}}+\frac{1}{b^{m_{0}}}=\frac{\xi_{1}b^{m_{0}-1}+\xi_{2}b^{m_{0}-2}+\cdots+ \xi_{ m_{0}}+1}{b^{m_{0}}}.$$
It follows from \eqref{divisible} that
\begin{equation*} 
\frac{b^{m_{0}}}{t^{n}}\mid\xi_{1}b^{m_{0}-1}+\xi_{2}b^{m_{0}-2}+\cdots+ \xi_{ m_{0}}+1.
\end{equation*} 
This is the Case 2 analogue of  \eqref{divisible1}  and on modifying the argument leading to \eqref{10Jan} in the obvious manner, we obtain that
$$b_{*}^{\lceil\alpha_{2}l_{0}\tilde{n}\rceil-\alpha_{1}l_{0}\tilde{n}}\mid\xi_{\alpha_{1}l_{0}\tilde{n}+\lceil\alpha_{2} r\rceil+1}b^{\lceil\alpha_{2}l_{0}\tilde{n}\rceil-\alpha_{1}l_{0}\tilde{n}-1}+  \xi_{\alpha_{1}l_{0}\tilde{n}+\lceil\alpha_{2} r\rceil+2}b^{\lceil\alpha_{2}l_{0}\tilde{n}\rceil-\alpha_{1}l_{0}\tilde{n}-2}+\cdots+\xi_{m_{0}}+1.$$
Then, since $$ \xi_{\alpha_{1}l_{0}\tilde{n}+\lceil\alpha_{2} r\rceil+1},\xi_{\alpha_{1}l_{0}\tilde{n}+\lceil\alpha_{2} r\rceil+2},\ldots,\xi_{m_{0}}\in D\subseteq D_{*},$$ we have that $$ \xi_{\alpha_{1}l_{0}\tilde{n}+\lceil\alpha_{2} r\rceil+1},\xi_{\alpha_{1}l_{0}\tilde{n}+\lceil\alpha_{2} r\rceil+2},\ldots,\xi_{m_{0}}\notin D_{2}.$$
As a consequence of Lemma \ref{32} (ii), it follows that 
$$ \xi_{\alpha_{1}l_{0}\tilde{n}+\lceil\alpha_{2} r\rceil+1}=\xi_{\alpha_{1}l_{0}\tilde{n}+\lceil\alpha_{2} r\rceil+2}=\cdots=\xi_{m_{0}}=b-1 , $$
and thus 
\begin{equation*}
\begin{aligned}
\frac{p}{t^{n}}&=\frac{\xi_{1}}{b}+\frac{\xi_{2}}{b^{2}}+\cdots+\frac{\xi_{ m_{0}}}{b^{m_{0}}}+\frac{1}{b^{m_{0}}} \\[2ex]
&=\frac{\xi_{1}}{b}+\frac{\xi_{2}}{b^{2}}+\cdots+\frac{\xi_{\alpha_{1}l_{0}\tilde{n}+\lceil\alpha_{2} r\rceil}}{b^{\alpha_{1}l_{0}\tilde{n}+\lceil\alpha_{2} r\rceil}}+\frac{b-1}{b^{\alpha_{1}l_{0}\tilde{n}+\lceil\alpha_{2} r\rceil+1}}+\frac{b-1}{b^{\alpha_{1}l_{0}\tilde{n}+\lceil\alpha_{2} r\rceil+2}}+\cdots+\frac{b-1}{b^{m_{0}}}+\frac{1}{b^{m_{0}}} 
\\[2ex]
&=\frac{\xi_{1}}{b}+\frac{\xi_{2}}{b^{2}}+\cdots+\frac{\xi_{\alpha_{1}l_{0}\tilde{n}+\lceil\alpha_{2} r\rceil}}{b^{\alpha_{1}l_{0}\tilde{n}+\lceil\alpha_{2} r\rceil}}+\frac{b^{\lceil\alpha_{2}l_{0}\tilde{n}\rceil-\alpha_{1}l_{0}\tilde{n}}-1}{b^{m_{0}}}+\frac{1}{b^{m_{0}}}
\\[2ex]
&=\frac{\xi_{1}}{b}+\frac{\xi_{2}}{b^{2}}+\cdots+\frac{\xi_{\alpha_{1}l_{0}\tilde{n}+\lceil\alpha_{2} r\rceil}}{b^{\alpha_{1}l_{0}\tilde{n}+\lceil\alpha_{2} r\rceil}}+\frac{1}{b^{\alpha_{1}l_{0}\tilde{n}+\lceil\alpha_{2} r\rceil}}\in \  E(\alpha_{1}l_{0}\tilde{n} +\lceil\alpha_{2}r\rceil).
\end{aligned}
\end{equation*}
In other words,  \eqref{svqaz} is true when we are in \textbf{Case 2}. This completes the proof of the lemma. 
\textrm{}
\end{proof}

As already shown before giving the above proof,  with Lemma~\ref{important} at hand it  is easy to establish  Proposition \ref{ub} (ii). 


\subsection{The finale: proving the theorems}  \label{pot}

Having completed the necessary groundwork in the previous section, we are now ready to prove Theorems~\ref{12sv} and \ref{11}.

\begin{proof}[\rm \textbf{Proof of Theorem~\ref{12sv}}]  Naturally, we deal with parts (i) and (ii) separately. 
\medskip

\noindent \textbf{Part~(i).}  We are given that $\psi(n)\leq b^{-\lceil \alpha_{2}n\rceil-1}$ for all $n$ sufficiently large. However,   we can assume,without loss of generality, that this inequality holds for all $n\in\mathbb{N}$.  Indeed, if this was not the case, we may  instead consider the modified function $\psi^*$ defined  by  $\psi^{*}(n):=\min(\psi(n),b^{-\lceil \alpha_{2}n\rceil-1})$.  Then,  by construction  $\psi^{*}(n)=\psi(n)$ for  all $n$ sufficiently large, and so 
$W_{t}(\psi)  = W_{t}(\psi^*)$.   Thus, it suffices to  establish \eqref{emptyre} with $\psi$ replaced by  $\psi^{*}$.  
Now, with this assumption and the definition of $\Gamma_{n}(\psi)$ in mind, in order to prove part (i), it suffices to show that   
$$\Gamma_{n}(\psi)=\emptyset    \qquad {\rm for \ all \ }  \, n\in\mathbb{N}.$$
Assume, for contradiction, that there exists $n\in\mathbb{N}$ such that
$$\Gamma_{n}(\psi)\neq\emptyset.$$
That is, there exists $0\leq p \leq t^{n}$, such that
\begin{equation}\label{neq}
B\left(\frac{p}{t^{n}},\psi(n)\right)\cap C(b,D)\neq\emptyset.   
\end{equation}
Then, by  Lemma $\ref{30}$ (ii)  it follows that  $\frac{p}{t^{n}}\in E(\lceil\alpha_{2}n\rceil)$.   Moreover, since   $D$ does not contain $0$ and $b-1$, we obtain  
$$B\left(\frac{p}{t^{n}},b^{-\lceil\alpha_{2}n\rceil-1}\right)\cap C_{\lceil \alpha_{2}n\rceil+1}(b,D)=\emptyset \, ,$$
where, recall, by definition   $C_m (b,D)$  is  the union over the $m$-th level basic intervals associated with the Cantor-type construction of $C (b,D)$.
Therefore, 
$$B\left(\frac{p}{t^{n}},b^{-\lceil\alpha_{2}n\rceil-1}\right)\cap C (b,D)=\emptyset.$$
Since $\psi(n)\leq b^{-\lceil \alpha_{2}n\rceil-1}$, it follows that
$$B\left(\frac{p}{t^{n}}, \psi(n)\right)\cap C (b,D)=\emptyset,$$
which contradicts \eqref{neq}. This completes the proof of part (i).

\bigskip

\noindent \textbf{Part~(ii).} For any $\delta>0$, since $\psi(n)\leq b^{-\lceil \alpha_{2}n\rceil-1}$ for all $n$ sufficiently  large, there exists $N_{0}\in\mathbb{N}$, such that for all $N\geq N_{0}$,
$$\left\{
 B\left(\frac{p}{t^{n}},\psi(n)\right)\cap C(b,D): \ n\geq N,p\in\Gamma_{n}(\psi)\right\}$$
is a $\delta$-cover of $W_{t}(\psi)\cap C(b,D)$. It follows that for any $s\geq0$ and any $N\geq N_{0}$, we have 
\begin{equation*}
\mathcal{H}_{\delta}^{s}(W_{t}(\psi)\cap C(b,D))  \, \leq   \, \sum_{n=N}^{\infty}\sum_{p\in\Gamma_{n}(\psi)} \psi(n) ^{s}  \, =   \, \sum_{n=N}^{\infty} \psi(n) ^{s}   \ \#\Gamma_{n}(\psi).
\end{equation*}
This together with Proposition \ref{ub} (ii) implies  that
\begin{equation}\label{35}
\mathcal{H}_{\delta}^{s}(W_{t}(\psi)\cap C(b,D))\ll\sum_{n=N}^{\infty}\psi(n)^{s}b^{n\alpha_{1}\gamma}
\end{equation}
for all $s\geq0$ and all $N\geq N_{0}$.  The upshot is  that if  $\sum_{n=1}^{\infty}\psi(n)^{s}b^{\alpha_{1}\gamma n} $ converges,  then the sum in \eqref{35} tends to zero as $N \to \infty$, and so by definition
$$\mathcal{H}^{s}(W_{t}(\psi)\cap C(b,D))=0   \, , $$
as desired.    It now remains to establish the `In addition'' statement  within part (ii). 
With this in mind, let $\alpha_1 \in\mathbb{N}$. Then, for any $n \in\mathbb{N}$, by the definition of $G_{n}$ (see \eqref{qwrt}), it is easily verified that 
\begin{equation*}
\begin{aligned}
G_{n}
&=\left\{0\le p\le t^{n}:\frac{p}{t^{n}}\in E(\lceil\alpha_{2} n\rceil) \right\}  \, . 
\end{aligned}
\end{equation*}
Thus, Lemma \ref{important} implies that 
$$\Gamma_{n}(\psi)\subseteq G_{n}$$
and 
\begin{equation*}
\begin{aligned}
G_{n}&\subseteq\left\{0\le p\le t^{n}:\frac{p}{t^{n}}\in E(\alpha_{1} n)\right\}. 
\end{aligned}
\end{equation*}
Hence,
\begin{equation*}
\begin{aligned}
 \bigcup_{p\in\Gamma_{n}(\psi)}\left(B\left(\frac{p}{t^{n}},\psi(n)\right)\cap C(b,D)\right)  
&  \ \subseteq \ \bigcup_{p\in G_{n} }\left(B\left(\frac{p}{t^{n}},\psi(n)\right)\cap C(b,D)\right) 
 \\[2ex]
&  \ \subseteq   \ \bigcup_{q=0}^{(b^{\alpha_{1}})^{n}}\left(B\left(\frac{q}{(b^{\alpha_{1}})^{n}},\psi(n)\right)\cap C(b,D)\right).
\end{aligned}
\end{equation*}
Therefore,
\begin{equation}
\begin{aligned}
\bigcup_{p=0}^{t^{n}}\left(B\left(\frac{p}{t^{n}},\psi(n)\right) \cap C(b,D)\right)& \ = \ \bigcup_{p\in\Gamma_{n}(\psi)}\left(B\left(\frac{p}{t^{n}}, \psi(n) \right)\cap C(b,D)\right) \nonumber \\[2ex]
& \ \subseteq \ \bigcup_{q=0}^{(b^{\alpha_{1}})^{n}}\left(B\left(\frac{q}{(b^{\alpha_{1}})^{n}},\psi(n)\right)\cap C(b,D)\right)  \label{lkj}
\end{aligned}
\end{equation}

On the other hand, in view of  \eqref{equality} it follows  that  $b^{\alpha_{1}}\mid t$, and  so
$$\bigcup_{q=0}^{(b^{\alpha_{1}})^{n}}\left(B\left(\frac{q}{(b^{\alpha_{1}})^{n}},\psi(n)\right)\cap C(b,D)\right)\subseteq\bigcup_{p=0}^{t^{n}}\left(B\left(\frac{p}{t^{n}},\psi(n)\right)\cap C(b,D)\right).$$
Thus, together with \eqref{lkj} we obtain equality;  that is  
$$\bigcup_{q=0}^{(b^{\alpha_{1}})^{n}}\left(B\left(\frac{q}{(b^{\alpha_{1}})^{n}},\psi(n)\right)\cap C(b,D)\right) =\bigcup_{p=0}^{t^{n}}\left(B\left(\frac{p}{t^{n}},\psi(n)\right)\cap C(b,D)\right).$$
The upshot is that
\begin{equation*}
\begin{aligned}
W_{t}(\psi)\cap C(b,D)& \ = \ \bigcap_{N=1}^{\infty}\bigcup_{n=N}^{\infty}\bigcup_{p=0}^{t^{n}}\left(B\left(\frac{p}{t^{n}},\psi(n)\right)\cap C(b,D)\right) \\[2ex] &   \ =  \ \bigcap_{N=1}^{\infty}\bigcup_{n=N}^{\infty}\bigcup_{q=0}^{(b^{\alpha_{1}})^{n}}\left(B\left(\frac{q}{(b^{\alpha_{1}})^{n}}, \psi(n)\right)\cap C(b,D)\right) \\[2ex] & \ = \ W_{b^{\alpha_{1}}}(\psi)\cap C(b,D) \, ,
\end{aligned}
\end{equation*}
as desired.  This completes the proof of part (ii). 
\textrm{}
\end{proof}

\bigskip

\bigskip

\begin{proof}[\rm \textbf{Proof of Theorem $\ref{11}$}] 
We prove Theorem $\ref{11}$ by considering two separate cases.

\medskip 

\noindent $\bullet$ \textbf{Case 1}: \textit{$\psi(n)\leq\frac{1}{2}t^{-n}$ for all $n$ sufficiently large. }  In the case, the proof  is similar to the proof of part (ii) of Theorem~\ref{12sv}. The only difference if that instead of using part (ii) of Proposition~\ref{ub}  we use part (i).  For completeness we give the details. For any $\delta>0$, there exists $N_{0}\in\mathbb{N}$, such that for all  $N\geq N_{0}$,
$$ \left\{
 B\left(\frac{p}{t^{n}},\psi(n)\right)\cap A: \ n\geq N,p\in\Gamma_{n}(\psi,A)\right\}$$
is a $\delta$-cover of $W_{t}(\psi)\cap A$. It thus follows on exploiting Proposition $\ref{ub}$ (i), that  for any $s>0$ and any $N\geq N_{0}$,
\begin{equation*}
\mathcal{H}_{\delta}^{s}(W_{t}(\psi)\cap A)\leq\sum_{n=N}^{\infty}\sum_{p\in\Gamma_{n}(A,\psi)}(2\psi(n))^{s}=\sum_{n=N}^{\infty}(2\psi(n)) ^{s}\#\Gamma_{n}(A,\psi)\ll\sum_{n=N}^{\infty}\psi(n)^{s}N_{t^{-n}}(A).
\end{equation*}
The upshot is  that if  $\sum_{n=1}^{\infty}\psi(n)^{s}N_{t^{-n}}(A) $ converges,  then the sum on the right hand side tends to zero as $N \to \infty$, and so by definition
$\mathcal{H}^{s}(W_{t}(\psi)\cap A =0    $ as desired. 

\bigskip

\noindent $\bullet$ \textbf{Case 2}: \textit{$\psi(n)>\frac{1}{2}t^{-n}$ for infinitely many $n\in\mathbb{N}$.}
 In this case, it follows from  the definition of $W_{t}(\psi)$ that $W_{t}(\psi)=[0,1]$ and so  $$W_{t}(\psi)\cap A=A.$$ Therefore, it suffices to show that 
$$\mathcal{H}^{s}(A)=0  \quad   {\rm if }  \quad  \sum\limits_{n=1}^{\infty}\psi(n)^{s}N_{t^{-n}}(A)  < \infty \, .   $$
 With this in mind, consider the modified function $\psi^*$ defined  by  $\psi^{*}(n)=\min(\psi(n),\frac{1}{2}t^{-n})$.  Then,  by construction $\psi^{*}(n)\leq\frac{1}{2}t^{-n}$ for all $n\in\mathbb{N}$  and so 
$$\sum\limits_{n=1}^{\infty}\psi^{*}(n)^{s}N_{t^{-n}}(A) \ \leq \ \sum\limits_{n=1}^{\infty}\psi(n)^{s}N_{t^{-n}}(A)<\infty.$$
Hence, by the conclusion of \textbf{Case 1}, we obtain that
$$\mathcal{H}^{s}(W_{t}(\psi^{*})\cap A)=0.$$
Furthermore, since $\psi(n)>\frac{1}{2}t^{-n}$ for infinitely many $n\in\mathbb{N}$, it follows that
$$\psi^{*}(n)=\frac{1}{2}t^{-n}$$
for infinitely many $n\in\mathbb{N}$ and so  
\begin{equation}\label{35sv}
 [0,1]\setminus\left\{\frac{p+\frac{1}{2}}{t^{n}}:n\in\mathbb{N} \, , \ 0\leq p\leq t^{n}-1\right\} \ \subseteq  \ W_{t}(\psi^{*}).   
\end{equation}
Here we use the fact that for any $x$ in the left  hand side set we have $\|t^{n}x\|< 1/2 
$ for all $n\in\mathbb{N}$. The upshot of $\eqref{35sv}$ is that $W_{t}(\psi^{*}) $ coincides with $[0,1]$ up to a countable set.  Consequently,    
$$\mathcal{H}^{s}(A)=\mathcal{H}^{s}(W_{t}(\psi^{*})\cap A)=0.$$
This completes the proof.
\textrm{}
\end{proof}

\noindent \textbf{Acknowledgments.} The authors thank Junjie Huang, Ruofan Li and Yufeng Wu for helpful discussions and suggestions. BL was supported by National Key R\&D Program of China (No. 2024YFA1013700), NSFC 12271176 and Guangdong Natural Science Foundation 2024A1515010946.
SV would like to warmly thank Bing Li for the kind invitation to Guangzhou as part of the South China University of Technology (SCUT) Overseas Lecturer Programme. This paper grew out of discussions during the visit.

\end{document}